\newtheorem{theorem}{Theorem}[section]
\newtheorem{corollary}[theorem]{Corollary}
\theoremstyle{definition}
\newtheorem{definition}[theorem]{Definition}
\newtheorem{example}[theorem]{Example}
\theoremstyle{remark}
\newtheorem{remark}[theorem]{Remark}
\numberwithin{equation}{section}
\begin{document}

\title[On some properties of weak solutions]{On some properties of weak solutions \\ to elliptic equations with divergence-free drifts}

\author{Nikolay Filonov}
\address{V.A. Steklov Mathematical Institute, St.-Petersburg, Fontanka 27, 191023, Russia}
\email{filonov@pdmi.ras.ru}
\thanks{Both authors are supported by RFBR grant 17-01-00099-a.}

\author{Timofey Shilkin}
\address{V.A. Steklov Mathematical Institute, St.-Petersburg, Fontanka 27, 191023, Russia}
\email{shilkin@pdmi.ras.ru}
\thanks{The research of the second author
leading to these results has received funding from the People
Programme (Marie Curie Actions) of the European Union's Seventh
Framework Programme FP7/2007-2013/ under REA grant agreement n°
319012 and from the Funds for International Co-operation under
Polish Ministry of Science and Higher Education grant agreement n°
2853/7.PR/2013/2. The  author also thanks the Technische Universit\"
at of Darmstadt for its hospitality.}

\subjclass{35B65}
\date{July  12, 2017.}


\keywords{elliptic equations, weak solutions, regularity}

\begin{abstract}
We discuss the local properties of weak solutions to the equation
$-\Delta u + b\cdot\nabla u=0$. The corresponding theory is
well-known in the case $b\in L_n$, where $n$ is the dimension of the
space. Our main interest is focused on the case $b\in L_2$. In this
case the structure assumption $\operatorname{div} b=0$ turns out to
be crucial.
\end{abstract}

\maketitle

\section{Introduction and Notation}

Assume $n\ge 2$, $\Omega\subset \mathbb  R^n$ is a smooth bounded
domain, $b: \Omega\to \mathbb  R^n$, $f: \Omega\to \mathbb  R$. In
this paper we investigate the   properties of weak solutions $u:
\Omega\to \mathbb  R$ to the following scalar equation
\begin{equation}
-\Delta u + b\cdot \nabla u \ = \ f \qquad \mbox{in} \quad \Omega.
\label{Equation}
\end{equation}
This equation describes the diffusion in a stationary incompressible
flow. If it is not stated otherwise, we always impose the following
conditions
\begin{equation*}
b\in L_2(\Omega) , \qquad f\in W^{-1}_2(\Omega)
\end{equation*}
(see the list of notation at the end of this section). We use the
following

\begin{definition}
\label{Def1} Assume $b\in L_2(\Omega)$, $f\in W^{-1}_2(\Omega)$. The
function $u\in W^1_2(\Omega)$ is called a weak solution to the
equation \eqref{Equation} if the following integral identity holds:
\begin{equation}
\int\limits_\Omega \nabla u\cdot (  \nabla \eta + b  \eta)~dx \ = \
\langle f, \eta\rangle, \qquad \forall~\eta\in C_0^\infty(\Omega).
\label{Integral_Identity_1}
\end{equation}
\end{definition}

Together with the equation \eqref{Equation} one can consider the
formally conjugate (up to the sign of the drift) equation
\begin{equation}
-\Delta u + \operatorname{div}(bu) \ = \ f \qquad \mbox{in} \quad
\Omega. \label{Equation1}
\end{equation}

\begin{definition}
\label{Def2} Assume $b\in L_2(\Omega)$, $f\in W^{-1}_2(\Omega)$. The
function $u\in W^1_2(\Omega)$ is called a weak solution to the
equation \eqref{Equation1} if
\begin{equation}
\int\limits_\Omega (\nabla u   -bu)\cdot   \nabla \eta  ~dx \ = \
\langle f, \eta\rangle, \qquad \forall~\eta\in C_0^\infty(\Omega).
\label{Integral_Identity_2}
\end{equation}
\end{definition}

The advantage of the equation \eqref{Equation1} is that it allows
one to define weak solutions for a drift $b$ belonging to a weaker
class than $L_2(\Omega)$. Namely, Definition \ref{Def2} makes sense
for $u\in W^1_2(\Omega)$ if
\begin{equation}
b\in L_s(\Omega) \qquad \mbox{where} \qquad  s \ = \ \left\{ \
\begin{array}{cl}\frac {2n}{n+2}, \qquad  & n\ge 3, \\
1+\varepsilon,  \ \varepsilon>0, & n=2 .\end{array}\right.
\label{Weak_drift}
\end{equation}
Nevertheless, it is clear that for a divergence-free drift $b\in
L_2(\Omega)$ the Definitions \ref{Def1} and \ref{Def2} coincide.

Together with the equation \eqref{Equation} we discuss boundary
value problems with Dirichlet boundary conditions:
\begin{equation}
\left\{ \quad \gathered -\Delta u +b\cdot \nabla u = f \quad \mbox{in} \quad \Omega , \\
u|_{\partial \Omega} = \varphi .\qquad \endgathered \right.
\label{Dirichlet}
\end{equation}
For weak solutions the boundary condition is understood in the sense
of traces. Assume   $f$ is ``good enough'' and $b \in L_2(\Omega)$,
$\operatorname{div} b = 0$. Our main observation is that the
regularity of solution $u$ inside $\Omega$ can depend on the
behaviour of its boundary values. If the function $\varphi$ is
bounded, then the solution $u$ is also bounded (see Theorem
\ref{Week_Max_Principle_2} below). If the function $\varphi$ is
unbounded on $\partial\Omega$, then the solution $u$ can become
infinite in internal points of $\Omega$ (see Example \ref{Example_1}
below). So, we distinguish between two cases: the case of general
boundary data $\varphi \in W^{1/2}_2(\partial \Omega)$, and the case
of bounded boundary data
\begin{equation}
\varphi \in L_\infty(\partial \Omega)\cap W^{1/2}_2(\partial \Omega)
\label{BC_regular} .
\end{equation}
Discussing the properties  of weak solutions to the problem
\eqref{Dirichlet} we also distinguish between another two cases: in
Section 2 we consider sufficiently regular drifts, namely, $b\in
L_n(\Omega)$, and in Section 3 we focus on the case of drifts $b$
from $L_2(\Omega)$ satisfying $\operatorname{div} b = 0$. Section 4
is devoted to possible ways of relaxation of the condition $b\in
L_n(\Omega)$ in the framework of the regularity theory. In Appendix
for reader's convenience some proofs (most of which are either known
or straightforward) are gathered.

 Together with the  elliptic equation \eqref{Equation} it is possible to consider its
 parabolic analogue
\begin{equation}
\label{113} \partial_t u -\Delta u + b\cdot \nabla u \ = \ f \qquad
\mbox{in} \quad \Omega\times (0,T),
\end{equation}
 but it should be a subject of a separate survey. We address the interested
 readers to the related papers \cite{Zhang}, \cite{Lis_Zhang},
  \cite{NU},  \cite{Sem},  \cite{SSSZ}, \cite{Vicol}, \cite{SVZ} and references there.

In the paper we explore the following notation. For any $a$, $b\in
\mathbb  R^n$ we denote by $a\cdot b$ its scalar product in $\mathbb
R^n$. We denote by $L_p(\Omega)$ and $W^k_p(\Omega)$ the usual
Lebesgue and Sobolev spaces. The space
$\overset{\circ}{W}{^1_p}(\Omega)$ is the closure of
$C_0^\infty(\Omega)$ in $W^1_p(\Omega)$ norm. The negative Sobolev
space $W^{-1}_p(\Omega)$, $p\in (1, +\infty)$,  is the set of all
distributions which are bounded functionals on
$\overset{\circ}{W}{^1_{p'}}(\Omega)$ with $p':=\frac{p}{p-1}$. For
any $f\in W^{-1}_p(\Omega)$ and $w\in
\overset{\circ}{W}{^1_{p'}}(\Omega)$ we denote by $\langle
f,w\rangle$ the value of the distribution $f$ on the function $w$.
We use the notation $W^{1/2}_2(\partial \Omega)$ for the
Slobodetskii--Sobolev space. By $C(\bar \Omega)$ and  $C^\alpha(\bar
\Omega)$, $\alpha\in (0,1)$ we denote the spaces of continuous and
H\" older continuous functions on $\bar \Omega$. The space
$C^{1+\alpha}(\bar \Omega)$ consists of   functions $u$ whose
gradient $\nabla u$ is   H\" older continuous. The index ``loc'' in
notation of the functional spaces $L_{\infty,loc}(\Omega)$,
$C^\alpha_{loc}( \Omega)$, $C^{1+\alpha}_{loc}( \Omega)$ etc implies
that the function belongs to the corresponding functional class over
every compact set which is contained in $\Omega$. The symbols
$\rightharpoonup$ and $\to $ stand for the weak and strong
convergence respectively. We denote by $B_R(x_0)$ the ball in
$\mathbb R^n$ of radius $R$ centered at $x_0$ and write $B_R$ if
$x_0=0$. We write also $B$ instead of $B_1$.

\section{Regular drifts}\label{sreg}
\subsection{Local properties}

For sufficiently regular drifts we have the local H\" older
continuity of a solution.

\begin{theorem}
\label{Holder} Assume
\begin{equation}
\label{210} b \in L_n (\Omega) \quad \text{if} \quad n \ge 3, \qquad
\int\limits_{\Omega} |b|^2\ln(2+|b|^2)~dx \ < \infty \quad \text{if}
\quad n = 2 .
\end{equation}
Let $u\in W^1_2(\Omega)$ be a weak solution to \eqref{Equation} with
$f$ satisfying
\begin{equation*}
f\in L_p(\Omega),  \qquad  p>\frac n2\ . 
\end{equation*}
Then
$$
u\in C^\alpha_{loc}(\Omega) \qquad \mbox{with} \qquad \left\{ \
\begin{array}{cl} \alpha= 2-\frac np, & p<n, \\ \forall~\alpha < 1, & p\ge n.
\end{array}\right.
$$
\end{theorem}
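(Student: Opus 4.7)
The plan is to follow the classical De Giorgi--Nash--Moser scheme, adapting it to the critical drift. In outline: first establish local boundedness of $u$ by Moser-type iteration on level sets, then derive H\"older continuity via De Giorgi's oscillation decay (or, equivalently, a Campanato-type argument on $\nabla u$); the exponent $\alpha$ will then emerge from the interaction of $f\in L_p$ with the homogeneous estimate.

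For local boundedness I would test the integral identity \eqref{Integral_Identity_1} with $\eta=\zeta^2(u-k)_+$, where $\zeta$ is a smooth cut-off in a ball $B_{2R}\subset\Omega$ and $k\ge0$ is a truncation level. The diffusion produces $\int\zeta^2|\nabla(u-k)_+|^2$, which must absorb
$$
\int \zeta^2 |b|\,|\nabla u|(u-k)_+ \quad\text{and}\quad \int |f|\zeta^2(u-k)_+.
$$
In dimension $n\ge 3$, H\"older's inequality together with the Sobolev embedding $W^1_2\hookrightarrow L_{2n/(n-2)}$ bounds the drift term by
$$
\|b\|_{L_n(A_k)}\,\|\zeta\nabla(u-k)_+\|_{L_2}\,\|\zeta(u-k)_+\|_{L_{2n/(n-2)}},
$$
where $A_k=\{u>k\}\cap\operatorname{supp}\zeta$; absolute continuity of $\int|b|^n$ gives $\|b\|_{L_n(A_k)}\to 0$ as $k\to\infty$, enabling absorption on the left at a sufficiently high truncation level. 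In dimension $n=2$, where the critical space $L_n=L_2$ fails to yield $L_\infty$ via Sobolev, I would use the Orlicz hypothesis in \eqref{210} paired with Trudinger's exponential embedding: by Young's inequality in Zygmund-Orlicz duality, $\int |b|^2(u-k)_+^2\zeta^2$ is controlled by the $L_2\log L$-norm of $|b|^2$ on $A_k$ times a quantity depending exponentially on $\zeta(u-k)_+$, both of which can be made as small as needed. Iterating the resulting Caccioppoli inequality in the Moser manner yields $u\in L_{\infty,loc}(\Omega)$.

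With $L_\infty$-control in hand, I would establish H\"older continuity by De Giorgi's oscillation argument. Working in a ball $B_r\subset\Omega$, truncating at the median level, and repeating the energy estimate with the now-bounded coefficient $u$ absorbed into the data, one arrives at a dyadic decay of the form
$$
\operatorname*{osc}_{B_{r/2}} u \ \le\ \theta\,\operatorname*{osc}_{B_r} u \ +\ C r^{2-n/p}\|f\|_{L_p}
$$
with some $\theta\in(0,1)$ independent of $r$ (the power $2-n/p$ being the standard contribution of an $L_p$ forcing). Iterating on dyadic balls delivers exactly the exponents in the statement: $\alpha=2-n/p$ when $p<n$, and any $\alpha<1$ when $p\ge n$ (in the latter regime $f$ is of lower order and the exponent is determined by the homogeneous part, which can be taken arbitrarily close to~$1$).

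The main obstacle is the critical scaling of the drift: neither $\|b\|_{L_n}$ in $n\ge3$ nor the Orlicz norm of $b$ in $n=2$ can be shrunk by passing to a smaller ball, so one cannot simply invoke a perturbative argument off $-\Delta$. The necessary smallness must be extracted from the absolute continuity of $\int|b|^n$ (resp.\ the Orlicz integral) over the level sets $A_k$, and this is what dictates the whole geometry of the iteration. The two-dimensional case carries the additional subtlety that the Sobolev embedding lands in an Orlicz rather than a Lebesgue class, so the matching of Orlicz partners in Young's inequality --- and the precise appearance of $\ln(2+|b|^2)$ in \eqref{210} --- is what makes the argument go through.
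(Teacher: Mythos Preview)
The paper does not give its own proof of this theorem; it simply cites the literature (Stampacchia and Nazarov--Uraltseva for H\"older continuity with \emph{some} exponent, and \cite{Filonov} for the improvement to arbitrary $\alpha<1$), remarking that the extension to nonzero $f$ is routine. Your De Giorgi--Nash--Moser outline is essentially the classical route behind the first set of references and would indeed produce local boundedness and H\"older continuity with \emph{some} exponent $\alpha_0\in(0,1)$.

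There is, however, a genuine gap in the last paragraph. The De Giorgi oscillation lemma yields a \emph{fixed} contraction factor $\theta\in(0,1)$, determined by the density/measure-to-pointwise step, and hence a fixed $\alpha_0=\log_2(1/\theta)$ that is typically small. Iterating your displayed inequality gives only $\alpha=\min(\alpha_0,\,2-n/p)$, not $\alpha=2-n/p$. Your parenthetical claim that the homogeneous exponent ``can be taken arbitrarily close to~$1$'' is precisely the nontrivial point, and nothing in the argument you sketch produces it; the level-set absorption trick controls where the drift is large, but it does not improve the constant in the density lemma.

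To reach the sharp exponents one replaces the oscillation iteration by a perturbative Campanato comparison (this is the approach of \cite{Filonov}, which the paper cites for exactly this refinement). On each small ball $B_r$ compare $u$ with the harmonic function $h$ sharing its boundary values; the remainder $w=u-h\in\overset{\circ}{W}{^1_2}(B_r)$ satisfies $-\Delta w=f-b\cdot\nabla u$, and its energy is controlled by $\|b\|_{L_n(B_r)}\|\nabla u\|_{L_2(B_r)}$ plus the $f$-contribution of order $r^{2-n/p}$. Absolute continuity of $\int|b|^n$ gives $\|b\|_{L_n(B_r)}\to0$ as $r\to0$ --- the same mechanism you correctly invoke on level sets --- so the comparison with the smooth function $h$ transfers any prescribed $\alpha<1$ to $u$. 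This Campanato step, not a sharpened De Giorgi iteration, is what delivers the exponents in the statement.
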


The local H\" older continuity of weak solutions in Theorem
\ref{Holder} with some $\alpha\in (0,1)$ is well-known,  see
\cite[Theorem 7.1]{Stampacchia} or \cite[Corollary 2.3]{NU} in the
case $f\equiv 0$. The H\" older continuity with arbitrary $\alpha\in
(0,1)$ was proved in the case $f\equiv 0$, for example, in
\cite{Filonov}. The extension of this result for non-zero right hand
side is routine.

If $b$ possesses more integrability then the first gradient of a
weak solution is locally H\" older continuous.

\begin{theorem}
Let $b\in L_p(\Omega)$ with $p>n$, and $u\in W^1_2(\Omega)$ be a
weak solution to \eqref{Equation} with $f\in L_p(\Omega)$. Then
$u\in C^{1+\alpha}_{loc}(\Omega)$ with  $\alpha=1-\frac np$.
\end{theorem}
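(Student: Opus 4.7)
The plan is to bootstrap the integrability of $\nabla u$ by reading the equation in non-divergence form $-\Delta u=f-b\cdot\nabla u$ and alternately applying interior $L^q$-estimates for the Laplacian together with the Sobolev and Morrey embeddings. Since $p>n$ (and, when $n=2$, the hypothesis $b\in L_p$ with $p>2$ easily implies the $L^2\log L$ condition of \eqref{210} via $|b|^2\ln(2+|b|^2)\le C(1+|b|^p)$), Theorem~\ref{Holder} already tells us that $u\in C^\alpha_{loc}(\Omega)$ for every $\alpha<1$; in particular $u$ is locally bounded, but the real work is to upgrade the regularity of $\nabla u$.

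Fix concentric balls $B_r\Subset B_R\Subset\Omega$ and assume inductively that $\nabla u\in L_{r_k}^{loc}(\Omega)$, starting from $r_0=2$. By H\"older's inequality $b\cdot\nabla u\in L_{s_k}^{loc}$ with $\tfrac{1}{s_k}=\tfrac{1}{p}+\tfrac{1}{r_k}$, and $f\in L_p\subset L_{s_k}^{loc}$. Applying the standard interior Calder\'on--Zygmund estimate for $-\Delta u=f-b\cdot\nabla u$ (using a cut-off supported in $B_R$ to absorb lower order terms) yields $\nabla^2 u\in L_{s_k}^{loc}$. If $s_k<n$, Sobolev embedding then gives $\nabla u\in L_{r_{k+1}}^{loc}$ with
\[
\frac{1}{r_{k+1}}\ =\ \frac{1}{s_k}-\frac{1}{n}\ =\ \frac{1}{r_k}-\Bigl(\frac{1}{n}-\frac{1}{p}\Bigr).
\]
Because $\tfrac{1}{n}-\tfrac{1}{p}>0$, the sequence $1/r_k$ strictly decreases by a fixed positive amount at each step, so after finitely many iterations we reach $s_k>n$; at that point Morrey's embedding promotes $\nabla u$ to $L^{\infty}_{loc}(\Omega)$.

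Once $\nabla u\in L^\infty_{loc}(\Omega)$, the right-hand side $f-b\cdot\nabla u$ lies in $L^p_{loc}(\Omega)$, so a final application of interior Calder\'on--Zygmund gives $u\in W^{2,p}_{loc}(\Omega)$, and Morrey's theorem with $p>n$ concludes $\nabla u\in C^{0,1-n/p}_{loc}(\Omega)$, i.e., $u\in C^{1+\alpha}_{loc}(\Omega)$ with $\alpha=1-n/p$. The only real technical nuisance is the localization bookkeeping---making sure at each iteration that the cut-off version of $u$ stays in $W^{2,s_k}$ on a slightly smaller ball---together with the borderline case $s_k=n$, which one handles by noting that $W^{2,n}\hookrightarrow W^{1,q}$ for every $q<\infty$ and performing one additional step. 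No tool beyond classical elliptic $L^q$-theory is required.
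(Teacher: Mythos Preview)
Your bootstrap argument is correct: starting from $\nabla u\in L_2^{loc}$, reading the equation as $-\Delta u=f-b\cdot\nabla u$, and iterating interior Calder\'on--Zygmund estimates together with Sobolev/Morrey embeddings raises the integrability of $\nabla u$ by the fixed increment $\tfrac1n-\tfrac1p>0$ per step until one lands in $W^{2,p}_{loc}\hookrightarrow C^{1+\alpha}_{loc}$ with $\alpha=1-\tfrac np$. The localization bookkeeping (the cut-off produces the lower-order terms $u\Delta\zeta$ and $\nabla\zeta\cdot\nabla u$, whose integrability one must track alongside $\nabla u$) and the borderline $s_k=n$ are genuine but routine, exactly as you say.

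As for comparison: the paper does not give its own proof of this statement; it simply refers to \cite[Chapter~III, Theorem~15.1]{LU}. Your argument is the standard $L_p$ bootstrap that underlies that reference (Ladyzhenskaya--Uraltseva package it for general quasilinear equations, so the presentation there is heavier, but the mechanism is the same). One small remark: the appeal to Theorem~\ref{Holder} at the outset is not actually needed for your scheme, since the iteration starts from $\nabla u\in L_2^{loc}$ and never uses H\"older continuity of $u$; you can drop that sentence without loss.
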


For the proof see \cite[Chapter III, Theorem 15.1]{LU}.

\subsection{Boundary value problem}

We consider the second term $\int\limits_\Omega \nabla u\cdot
b~\eta~dx$ in the equation \eqref{Integral_Identity_1} as a bilinear
form in $\overset{\circ}{W}{^1_2}(\Omega)$. It defines a linear
operator $T: \overset{\circ}{W}{^1_2}(\Omega) \to
\overset{\circ}{W}{^1_2}(\Omega) $ by the relation
\begin{equation}
\label{212} \int\limits_\Omega \nabla (T u) \cdot \nabla \eta~dx =
\int\limits_\Omega \nabla u\cdot b~\eta~dx, \quad \forall~u, \eta\in
\overset{\circ}{W}{^1_2}(\Omega) .
\end{equation}

The following result is well-known.

\begin{theorem}
\label{Operator_is_compact} Let $b$ satisfy \eqref{210}. Then the
operator $T:\overset{\circ}{W}{^1_2}(\Omega)\to
\overset{\circ}{W}{^1_2}(\Omega)$ defined by \eqref{212} is compact.
\end{theorem}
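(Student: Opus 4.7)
The plan is to approximate the drift $b$ in the relevant norm by smooth compactly supported vector fields $b_\varepsilon$, show that the corresponding operators $T_\varepsilon$ are compact by elliptic regularity, and pass to the limit via the closedness of the compact operators under norm convergence. The first step is to verify that $T$ is well-defined with a quantitative bound $\|Tu\|_{W^1_2}\le C\|u\|_{W^1_2}$. For $n\ge 3$, H\"older's inequality with exponents $(2,n,2n/(n-2))$ and the Sobolev embedding $\overset{\circ}{W}{^1_2}(\Omega)\hookrightarrow L_{2n/(n-2)}(\Omega)$ give
\[
\left|\int_\Omega \nabla u\cdot b\,\eta\,dx\right|\le \|\nabla u\|_{L_2}\|b\|_{L_n}\|\eta\|_{L_{2n/(n-2)}}\le C\|b\|_{L_n}\|\nabla u\|_{L_2}\|\nabla\eta\|_{L_2},
\]
so Riesz representation produces $Tu$ with $\|T\|\le C\|b\|_{L_n}$. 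For $n=2$ the same inequality holds with $\|b\|_{L_n}$ replaced by the Luxemburg norm associated with the Young function $\Phi(t)=t^2\ln(2+t^2)$: this follows from the Trudinger--Moser inequality (integrability of $\exp(\alpha\eta^2/\|\nabla\eta\|_{L_2}^2)$ for $\eta\in\overset{\circ}{W}{^1_2}(\Omega)$) combined with Orlicz--H\"older duality between $\Phi$ and its exponential-type complementary function.

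Next, for each $\varepsilon>0$ I choose $b_\varepsilon\in C_0^\infty(\Omega;\mathbb R^n)$ approximating $b$ in this norm (which is possible because $\Phi$ satisfies the $\Delta_2$-condition, so $C_0^\infty$ is dense), and let $T_\varepsilon$ be defined by \eqref{212} with $b_\varepsilon$ in place of $b$. Applying the estimate of the previous paragraph to the drift $b-b_\varepsilon$ yields $\|T-T_\varepsilon\|\to 0$ in the operator norm. On the other hand, $v:=T_\varepsilon u$ is the weak solution in $\overset{\circ}{W}{^1_2}(\Omega)$ of $-\Delta v=b_\varepsilon\cdot\nabla u$ in $\Omega$; since $b_\varepsilon\in L_\infty$, the right-hand side lies in $L_2(\Omega)$ with norm controlled by $\|\nabla u\|_{L_2}$, so standard $W^2_2$-regularity on the smooth bounded domain $\Omega$ gives $\|T_\varepsilon u\|_{W^2_2}\le C_\varepsilon\|u\|_{W^1_2}$. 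Rellich--Kondrachov makes the embedding $W^2_2(\Omega)\hookrightarrow W^1_2(\Omega)$ compact, so $T_\varepsilon$ is compact; and a norm limit of compact operators on a Hilbert space is compact, so $T$ itself is compact.

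The only delicate point is the two-dimensional case, where the critical embedding $\overset{\circ}{W}{^1_2}\not\hookrightarrow L_\infty$ fails and $L_n=L_2$ alone does not suffice to make the bilinear form in \eqref{212} bounded on $\overset{\circ}{W}{^1_2}\times\overset{\circ}{W}{^1_2}$. The logarithmic refinement in \eqref{210} is calibrated precisely so that the Orlicz dual of the space containing $b$ is an exponential class for which Trudinger--Moser supplies the required integrability of the test functions. Identifying this Orlicz pairing and verifying density of $C_0^\infty$ in the Orlicz norm is the main obstacle; once this is granted, the three-line approximation scheme above applies verbatim.
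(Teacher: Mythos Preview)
Your proof is correct and follows essentially the same approach as the paper: establish the bilinear estimate \eqref{2125} (via H\"older and Sobolev for $n\ge 3$, via the Orlicz/Trudinger--Moser pairing for $n=2$), then approximate $b$ by smooth $b_\varepsilon$ so that $T_\varepsilon\to T$ in operator norm, and use that each $T_\varepsilon$ is compact. The paper merely asserts the compactness of $T_\varepsilon$, whereas you supply the extra step through $W^2_2$-regularity and Rellich--Kondrachov; this is a legitimate and standard way to close that gap.
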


Indeed, if $n\ge 3$ then the estimate
\begin{equation}
\label{2125} \left| \int\limits_\Omega \nabla u\cdot
b~\eta~dx\right| \le C_b \|\nabla u\|_{L_2(\Omega)} \|\nabla
\eta\|_{L_2(\Omega)} \quad \forall~u, \eta\in
\overset{\circ}{W}{^1_2}(\Omega)
\end{equation}
follows by the imbedding theorem and the H\" older inequality. In
the case $n=2$ such estimate can be found for example in \cite[Lemma
4.3]{Filonov}. Next, the operator $T$ can be approximated in the
operator norm by compact linear operators $T_\varepsilon$ generated
by the bilinear forms $\int\limits_\Omega \nabla u\cdot
b_\varepsilon~\eta~dx$ where $b_\varepsilon\in
C^\infty(\bar\Omega)$.

\begin{remark}
The condition $b \in L_2(\Omega)$ in the case $n=2$ is not
sufficient. For example, one can take $\Omega = B_{1/3}$,
$$
b(x) = \frac {x}{|x|^2 \left| \ln |x|\right|^{3/4}}, \qquad u(x) =
\eta(x) = \left| \ln |x|\right|^{3/8} - (\ln 3)^{3/8} .
$$
Then $\int\limits_\Omega \nabla u\cdot b~\eta~dx = \infty$, and
therefore, the corresponding operator $T$ is unbounded.
\end{remark}

\begin{remark}
The issue of boundedness and compactness of the operator $T$ in the
case of the whole space, $\Omega = {\mathbb  R}^n$, is investigated
in full generality in \cite{MV}, see Theorem \ref{t41} below. In
this section we restrict ourselves by considering assumptions on $b$
only in $L_p$--scale.
\end{remark}

Now, the problem \eqref{Dirichlet} with $\varphi\equiv0$ reduces to
the equation $u + T u = h$ in $\overset{\circ}{W}{^1_2}(\Omega)$
with an appropriate right hand side $h$. The solvability of the last
equation follows from the Fredholm theory. Roughly speaking, ``the
existence follows from the uniqueness''.

The uniqueness in the case $b \in L_n(\Omega)$, $n\ge 3$, and
$\operatorname{div} b = 0$ is especially simple. In this situation
$$
\int\limits_\Omega b \cdot \nabla u~ u~dx = 0 \qquad \forall~u\in
\overset{\circ}{W}{^1_2}(\Omega),
$$
and the uniqueness for the problem \eqref{Dirichlet} follows. In the
general case of drifts satisfying \eqref{210} without the condition
$\operatorname{div} b = 0$ the proof of the uniqueness is more
sophisticated. It requires the maximum principle which can be found,
for example, in \cite{NU}, see Corollary 2.2 and remarks at the end
of Section 2 there.

\begin{theorem}
\label{Strong_max_principle} Let $b$ satisfy \eqref{210}. Assume
$u\in W^1_2(\Omega)$ is a weak solution to the problem
\eqref{Dirichlet} with $f\equiv 0$ and $\varphi \in
L_\infty(\partial \Omega)\cap W^{1/2}_2(\partial \Omega)$. Then
either $u\equiv const $ in $\Omega$ or the following estimate holds:
$$
\operatorname{essinf}\limits_{\partial \Omega} \varphi  \ < \ u(x) \
< \ \operatorname{esssup}\limits_{\partial \Omega} \varphi, \qquad
\forall~x\in \Omega.
$$
\end{theorem}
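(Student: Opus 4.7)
The plan is to combine the weak maximum principle, cited just above the statement as coming from \cite{NU}, with the local H\"older continuity from Theorem \ref{Holder} and a Harnack inequality for equation \eqref{Equation}. Set
\begin{equation*}
m := \operatorname{essinf}_{\partial\Omega}\varphi, \qquad M := \operatorname{esssup}_{\partial\Omega}\varphi.
\end{equation*}
The weak maximum principle yields $m\le u(x)\le M$ for a.e.\ $x\in\Omega$, and by Theorem \ref{Holder} (applied with $f\equiv 0$) $u$ admits a continuous representative, so these bounds hold pointwise in $\Omega$.

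To upgrade to strict inequality, suppose that $u(x_0)=M$ at some interior point $x_0\in\Omega$ and set $v:=M-u$. Then $v\ge 0$ is again a weak solution of \eqref{Equation} with $f\equiv 0$, and $v(x_0)=0$. I would apply the Harnack inequality for nonnegative weak solutions of \eqref{Equation} with drifts obeying \eqref{210}: on every concentric pair of balls $B_R(x_0)\subset B_{2R}(x_0)\subset\Omega$ one has
\begin{equation*}
\sup_{B_R(x_0)} v \ \le \ C\,\inf_{B_R(x_0)} v \ = \ 0,
\end{equation*}
so $v\equiv 0$ on $B_R(x_0)$. Hence the set $E:=\{x\in\Omega:u(x)=M\}$ is both closed (by continuity) and open (by Harnack); since $\Omega$ is connected, either $E=\emptyset$ --- in which case $u<M$ on all of $\Omega$ --- or $E=\Omega$, forcing $u\equiv M$ and, via the trace, $\varphi\equiv M$ a.e., so $u$ is constant. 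The lower strict inequality is obtained by the symmetric argument applied to $-u$, which satisfies \eqref{Equation} with the same drift $b$.

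The main obstacle is the Harnack inequality itself in the range of drifts permitted by \eqref{210}. For $b\in L_n$ with $n\ge 3$ it is classical and follows from Moser iteration: the drift contribution is absorbed using the Sobolev embedding $\overset{\circ}{W}{^1_2}\hookrightarrow L_{2n/(n-2)}$, H\"older's inequality, and the absolute continuity of $\|b\|_{L_n}$ on small balls. In the borderline two-dimensional case the refined integrability assumption in \eqref{210} plays the same absorption role, and the weak Harnack estimates on $v$ and $\log v$ follow from the Moser/De Giorgi machinery already developed in \cite{Filonov}. Given these ingredients, the remainder of the argument above is routine.
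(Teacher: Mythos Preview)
The paper does not prove Theorem \ref{Strong_max_principle} at all; the sentence immediately preceding the statement refers the reader to \cite{NU} (Corollary 2.2 and the remarks at the end of their Section 2) for the proof. Your sketch---weak maximum principle to get $m\le u\le M$, Theorem \ref{Holder} for continuity, then the Harnack inequality applied to $M-u$ (respectively $u-m$) to promote to strict inequality via a connectedness argument---is the standard route and is precisely the machinery developed in \cite{NU}, whose title is ``The Harnack inequality and related properties\dots''. So your proposal is correct and in line with the cited source.

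One small remark on your reading of the text: what is attributed to \cite{NU} just before the statement is the full (strong) maximum principle itself, not a separate weak maximum principle. The weak bound $m\le u\le M$ under \eqref{210} is of course available independently (e.g.\ from Stampacchia-type arguments, as used later in the paper's Appendix), so this does not affect the argument.
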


\begin{corollary}
\label{Unique} Let $b$ satisfy \eqref{210}. Then a weak solution to
the problem \eqref{Dirichlet} is unique in the space
$W^1_2(\Omega)$.
\end{corollary}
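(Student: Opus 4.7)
The plan is to reduce uniqueness to the strong maximum principle stated in Theorem \ref{Strong_max_principle}. Suppose $u_1, u_2 \in W^1_2(\Omega)$ are two weak solutions of \eqref{Dirichlet} with the same right-hand side $f$ and the same boundary datum $\varphi$. By linearity of the integral identity \eqref{Integral_Identity_1} in $u$, the difference $w := u_1 - u_2$ is a weak solution of \eqref{Equation} with right-hand side $f \equiv 0$. Since $u_1$ and $u_2$ have the same trace $\varphi$ on $\partial \Omega$, the function $w$ has zero trace, i.e., $w \in \overset{\circ}{W}{^1_2}(\Omega)$, which corresponds to boundary datum $\varphi_w \equiv 0$.

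Next, I would apply Theorem \ref{Strong_max_principle} to $w$ with boundary datum $\varphi_w \equiv 0$. This datum trivially satisfies $\varphi_w \in L_\infty(\partial\Omega)\cap W^{1/2}_2(\partial\Omega)$, and $b$ satisfies \eqref{210} by hypothesis. The theorem asserts that either $w \equiv \mathrm{const}$ in $\Omega$, or the strict two-sided bound
\[
\operatorname*{essinf}_{\partial\Omega} \varphi_w \ < \ w(x) \ < \ \operatorname*{esssup}_{\partial\Omega} \varphi_w
\]
holds for every $x \in \Omega$. With $\varphi_w \equiv 0$, the second alternative reads $0 < w(x) < 0$, which is impossible. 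Hence $w$ is constant in $\Omega$, and since its boundary trace vanishes, the constant must equal zero. Therefore $u_1 \equiv u_2$.

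There is no real obstacle here, since all the substantive work is carried out by Theorem \ref{Strong_max_principle}; the corollary is a direct consequence once one observes that the difference of two solutions is a solution of the homogeneous problem with zero boundary values, which is a bounded datum to which the strong maximum principle applies.
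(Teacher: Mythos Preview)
Your proof is correct and follows exactly the approach the paper intends: the corollary is stated immediately after Theorem~\ref{Strong_max_principle} without a separate proof, precisely because the uniqueness is obtained by applying the strong maximum principle to the difference $w=u_1-u_2$, which satisfies the homogeneous equation with zero boundary data. Your write-up simply makes this implicit reasoning explicit.
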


Now, the solvability of the problem \eqref{Dirichlet} is
straightforward.

\begin{theorem}
\label{Existence_weak_smooth} Let $b$ satisfy \eqref{210}. Then for
any $f \in W^{-1}_2(\Omega)$ and $\varphi \in W^{1/2}_2(\partial
\Omega)$ the problem \eqref{Dirichlet} has the unique weak solution
$u\in W^1_2(\Omega)$, and
$$
\|u\|_{W^1_2(\Omega)} \le C \left(\|f\|_{W^{-1}_2(\Omega)} +
\|\varphi\|_{W^{1/2}_2(\partial\Omega)}\right) .
$$
\end{theorem}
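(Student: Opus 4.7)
The plan is to reduce the problem to a homogeneous boundary condition and apply the Fredholm alternative to the compact perturbation from Theorem \ref{Operator_is_compact}.

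First I would extend the boundary data. By the standard trace theory there exists $\Phi\in W^1_2(\Omega)$ with $\Phi|_{\partial\Omega}=\varphi$ and $\|\Phi\|_{W^1_2(\Omega)}\le C\|\varphi\|_{W^{1/2}_2(\partial\Omega)}$. Writing $u=v+\Phi$ reduces \eqref{Dirichlet} to finding $v\in\overset{\circ}{W}{^1_2}(\Omega)$ such that
\begin{equation*}
\int_\Omega \nabla v\cdot(\nabla\eta+b\eta)\,dx=\langle \tilde f,\eta\rangle,\qquad\forall\,\eta\in C_0^\infty(\Omega),
\end{equation*}
where $\langle \tilde f,\eta\rangle:=\langle f,\eta\rangle-\int_\Omega\nabla\Phi\cdot(\nabla\eta+b\eta)\,dx$. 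Using the bilinear estimate \eqref{2125} (valid under \eqref{210}) I can bound the extra terms and conclude $\tilde f\in W^{-1}_2(\Omega)$ with $\|\tilde f\|_{W^{-1}_2(\Omega)}\le C(\|f\|_{W^{-1}_2(\Omega)}+\|\varphi\|_{W^{1/2}_2(\partial\Omega)})$.

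Next I recast the homogeneous problem as an operator equation. By Riesz representation there is a unique $h\in\overset{\circ}{W}{^1_2}(\Omega)$ with $\int_\Omega\nabla h\cdot\nabla\eta\,dx=\langle\tilde f,\eta\rangle$ and $\|h\|_{W^1_2(\Omega)}\le C\|\tilde f\|_{W^{-1}_2(\Omega)}$. With $T$ defined by \eqref{212}, the problem becomes
\begin{equation*}
v+Tv=h\qquad\text{in }\overset{\circ}{W}{^1_2}(\Omega).
\end{equation*}
By Theorem \ref{Operator_is_compact} the operator $T$ is compact, so $I+T$ is Fredholm of index zero. Corollary \ref{Unique} tells us that the kernel of $I+T$ is trivial (since any $v\in\ker(I+T)$ is a weak solution of \eqref{Dirichlet} with $f\equiv 0$, $\varphi\equiv 0$). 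Hence $I+T$ is a bijection of $\overset{\circ}{W}{^1_2}(\Omega)$, which yields the unique solution $v$ (and therefore $u=v+\Phi$) together with the continuous inverse estimate $\|v\|_{W^1_2(\Omega)}\le C\|h\|_{W^1_2(\Omega)}$ via the open mapping theorem.

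Combining these inequalities gives the asserted bound
$\|u\|_{W^1_2(\Omega)}\le\|v\|_{W^1_2(\Omega)}+\|\Phi\|_{W^1_2(\Omega)}\le C\bigl(\|f\|_{W^{-1}_2(\Omega)}+\|\varphi\|_{W^{1/2}_2(\partial\Omega)}\bigr)$. Uniqueness for general $\varphi$ follows by subtracting two solutions and applying Corollary \ref{Unique} to the difference. No real obstacle arises; the only point that needs a little care is the verification that the auxiliary right-hand side $\tilde f$ really lies in $W^{-1}_2(\Omega)$ when $b$ satisfies \eqref{210}, but this is precisely what \eqref{2125} (and its $n=2$ counterpart cited from \cite{Filonov}) provides.
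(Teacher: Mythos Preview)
Your proof is correct and follows essentially the same approach as the paper: extend $\varphi$ to $\tilde\varphi\in W^1_2(\Omega)$, reduce to homogeneous boundary data, recast as the operator equation $v+Tv=h$ in $\overset{\circ}{W}{^1_2}(\Omega)$, and invoke Fredholm theory together with the uniqueness from Corollary~\ref{Unique}. The paper states the Fredholm step more tersely and writes the modified right-hand side as $f+\Delta\tilde\varphi-b\cdot\nabla\tilde\varphi$, but the substance is identical; your only addition is spelling out the Riesz representation and the open mapping theorem for the norm estimate.
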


\begin{proof} For $\varphi\equiv0$ Theorem
\ref{Existence_weak_smooth} follows from Fredholm's theory. In the
general case the problem \eqref{Dirichlet} can be reduced to the
corresponding problem with homogeneous boundary conditions for the
function $v:=u-\tilde \varphi$, where $\tilde \varphi$ is some
extension of $\varphi$ from $\partial \Omega$ to $\Omega$ with the
control of the norm $\| \tilde \varphi\|_{W^1_2(\Omega)}\le c\|
\varphi\|_{W^{1/2}_2(\partial \Omega)}$. The function $v$ can be
determined as a weak solution to the problem
\begin{equation}
\left\{ \quad \gathered -\Delta v +b\cdot \nabla v = f +
\Delta\tilde \varphi - b \cdot \nabla  \tilde \varphi
\quad \mbox{in} \quad \Omega, \\
v|_{\partial \Omega} = 0 \qquad \endgathered \right.
\label{Dirichlet_homogeneous}
\end{equation}
Under assumption \eqref{210} the right hand side belongs to
$W^{-1}_2(\Omega)$ due to Theorem \ref{Operator_is_compact}.
\end{proof}

Note that for $n\ge 3$ the problems \eqref{Dirichlet} and
\eqref{Dirichlet_homogeneous} are equivalent only in the case of
``regular'' drifts $b\in L_n(\Omega)$. If $b \in L_2(\Omega)$ and
additionally $\operatorname{div} b = 0$, then $b \cdot \nabla \tilde
\varphi \in W^{-1}_{n'}(\Omega)$, $n'=\frac n{n-1}$, and the
straightforward reduction of the problem \eqref{Dirichlet} to the
problem with homogeneous boundary data is not possible.

Finally, to investigate in Section 3 the problem \eqref{Dirichlet}
with divergence-free drifts from $L_2(\Omega)$ we need the following
maximum estimate.

\begin{theorem}
\label{Weak_Max_Principle} Let $b$ satisfy \eqref{210}. Assume
$\varphi$ satisfies \eqref{BC_regular} and let $u\in W^1_2(\Omega)$
be a weak solution to \eqref{Dirichlet} with some $f \in
L_p(\Omega)$, $p>n/2$. Then

{\rm 1)} $u\in L_\infty(\Omega)$ and
\begin{equation}
\| u\|_{L_\infty(\Omega)} \ \le \  \|\varphi\|_{L_\infty(\partial
\Omega)} + C~\|f\|_{L_p(\Omega)}. \label{Max_Pr_2}
\end{equation}

{\rm 2)} If $\operatorname{div} b = 0$ then $C=C(n,p,\Omega)$ does
not depend on $b$.
\end{theorem}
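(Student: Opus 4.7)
The plan is the classical Stampacchia truncation method. Set $M:=\|\varphi\|_{L_\infty(\partial\Omega)}$ and, for $k\ge M$, let $v_k:=(u-k)_+$. Since the trace of $u-k$ on $\partial\Omega$ equals $\varphi-k\le 0$, one has $v_k\in\overset{\circ}{W}{^1_2}(\Omega)$. A density argument, combined with the bilinear form estimate \eqref{2125}, shows that $v_k$ is an admissible test function in \eqref{Integral_Identity_1}. Testing against it and using $\nabla u=\nabla v_k$ on $A(k):=\{u>k\}$ (and zero elsewhere) yields
$$\int_{\Omega}|\nabla v_k|^2\,dx + \int_{\Omega} b\cdot\nabla v_k\,v_k\,dx = \int_{\Omega} f\,v_k\,dx.$$

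The second term is where the divergence-free hypothesis enters. When $\operatorname{div} b=0$ the identity $b\cdot\nabla v_k\,v_k=\tfrac12 b\cdot\nabla(v_k^2)$ forces it to vanish (rigorously, by approximating $v_k$ in $\overset{\circ}{W}{^1_2}(\Omega)$ by $C_0^\infty$ functions and using \eqref{2125} to pass to the limit). In the general case I would estimate the term via \eqref{2125} restricted to $A(k)$, obtaining a bound of the form $\omega_b(k)\|\nabla v_k\|_{L_2}^2$ with $\omega_b(k)\to 0$ as $k\to\infty$, since $|A(k)|\to 0$ and $b$ has absolutely continuous norm. For $k\ge k_0=k_0(b)$ large enough the drift term is absorbed, giving $\|\nabla v_k\|_{L_2}^2\le 2\int_{A(k)} f\,v_k\,dx$. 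Crucially, in the divergence-free case one may simply take $k_0=M$, and this is precisely what produces a constant independent of $b$ in part 2).

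From this starting point the Stampacchia scheme is routine. Applying Hölder with exponents $p,p'$, a further application of Hölder on $A(k)$, and the Sobolev embedding $\|v_k\|_{L_{2^*}}\le C\|\nabla v_k\|_{L_2}$ (with $2^*=\tfrac{2n}{n-2}$ for $n\ge 3$, or any sufficiently large finite exponent in place of $2^*$ when $n=2$), one deduces
$$\|\nabla v_k\|_{L_2}\le C\|f\|_{L_p}\,|A(k)|^{\frac{1}{p'}-\frac{1}{2^*}}.$$
The assumption $p>n/2$ makes this exponent strictly positive and, together with the inequality $(h-k)|A(h)|^{1/2^*}\le\|v_k\|_{L_{2^*}}$ for $h>k$, yields the recursive bound
$$|A(h)|\le\frac{C\|f\|_{L_p}^{2^*}}{(h-k)^{2^*}}\,|A(k)|^{\beta},\qquad \beta=\frac{2^*}{p'}-1>1,\qquad h>k\ge k_0.$$
The standard Stampacchia iteration lemma then produces $d$ controlled by $C\|f\|_{L_p}|A(k_0)|^{(\beta-1)/2^*}$ with $|A(k_0+d)|=0$, i.e.\ $u\le k_0+d$ a.e. The lower bound follows by applying the same argument to $-u$, which is a weak solution of \eqref{Dirichlet} with data $-f$ and $-\varphi$, and the combined estimate gives \eqref{Max_Pr_2}. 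The main technical point, and the only place where divergence-freeness is used, is the handling of the drift term described above; once it is either eliminated (divergence-free case, $k_0=M$, $|A(k_0)|\le|\Omega|$) or absorbed (general case, $k_0=k_0(b)$), the conclusion reduces to the usual iteration for $-\Delta u=f$.
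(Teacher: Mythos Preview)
Your treatment of part~2) (the divergence-free case) is essentially identical to the paper's: test with $(u-k)_+$ for $k\ge M$, observe that the drift term vanishes, and run the Stampacchia iteration. The paper quotes \cite[Chapter~II, Lemma~5.3]{LU} for the final step rather than the level-set recursion you wrote, but this is the same argument.

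For part~1) the paper takes a different route, and your direct approach has a gap. The paper does \emph{not} absorb the drift term in the truncation identity; instead it splits $u=u_1+u_2$, where $u_1$ solves \eqref{Dirichlet} with $\varphi\equiv 0$ and $u_2$ solves it with $f\equiv 0$. For $u_1$ it invokes Stampacchia's estimate \eqref{Stamp} (which carries an extra $\|u_1\|_{L_2}$ on the right) and removes that term via the a~priori bound $\|u_1\|_{W^1_2}\le C\|f\|_{W^{-1}_2}$ from Theorem~\ref{Existence_weak_smooth}. For $u_2$ it applies the strong maximum principle (Theorem~\ref{Strong_max_principle}) to obtain $\|u_2\|_{L_\infty}\le\|\varphi\|_{L_\infty(\partial\Omega)}$ with coefficient exactly~$1$. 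The problem with your absorption argument is that the threshold $k_0$ at which $\omega_b(k)<\tfrac12$ depends on $|A(k)|$, hence on $u$ itself, not only on $b$: you need $\|b\|_{L_n(A(k))}$ small, and the value of $k$ achieving this is governed by how fast $|A(k)|\to 0$, i.e.\ by some norm of $u$. Even after feeding in the $W^1_2$ a~priori estimate, your bound reads $\|u\|_{L_\infty}\le k_0(b,\|\varphi\|_{W^{1/2}_2},\|f\|_{W^{-1}_2})+C\|f\|_{L_p}$, which is not of the claimed form $\|\varphi\|_{L_\infty(\partial\Omega)}+C\|f\|_{L_p}$. Your argument does prove $u\in L_\infty(\Omega)$, but to recover \eqref{Max_Pr_2} with the coefficient~$1$ on $\|\varphi\|_{L_\infty}$ you would still need the splitting and the strong maximum principle that the paper uses.
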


We believe Theorem \ref{Weak_Max_Principle} is known though it is
difficult for us to identify the precise reference to the statement
we need. So, we present its proof in Appendix.

\begin{remark}
\label{r211} For $n \ge 3$ consider the following example:
$$
\Omega =B, \qquad u(x) = \ln|x|, \qquad b(x) = (n-2)\frac{x}{|x|^2}
.
$$
The statements of Theorem \ref{Holder}, Theorem
\ref{Strong_max_principle} and Corollary \ref{Unique} are violated
for these functions. On the other hand, $-\Delta u + b\cdot \nabla u
= 0$, $u \in \overset{\circ}{W}{^1_2}(\Omega)$ and $b \in
L_p(\Omega)$ for any $p<n$. It means that for non-divergence free
drifts  the condition $b \in L_n(\Omega)$ in \eqref{210} is sharp.
\end{remark}

\begin{remark}
\label{r212} For $n=2$ the condition $b \in L_2(\Omega)$ is not
sufficient. The statements of Theorem \ref{Holder}, Theorem
\ref{Strong_max_principle} and Corollary \ref{Unique} are violated
for the functions
$$
u(x)=\ln\left| \ln |x|\right|, \qquad b(x) = -\frac{x}{|x|^2\ln|x|}
$$
in a ball $\Omega = B_{1/e}$, nevertheless $b \in L_2(\Omega)$.

Converesely, if in the case $n=2$ we assume that $b \in L_2(\Omega)$
and $\operatorname{div} b = 0$, then the estimate \eqref{2125} is
fulfilled (see \cite{MV} or \cite{Filonov}), and all statements of
this section (Theorems \ref{Holder}, \ref{Operator_is_compact},
\ref{Strong_max_principle}, \ref{Existence_weak_smooth} and
\ref{Weak_Max_Principle}) hold true, see \cite{Filonov} or
\cite{NU}. So, this case can be considered as the regular one. See
also Remark \ref{r43} below.
\end{remark}

\section{Non-regular divergence-free drifts}

 In this section we always assume that
$\operatorname{div} b = 0$. It turns out that this assumption plays
the crucial role in local boundedness of weak solutions if one
considers drifts $b\in L_p(\Omega)$ with $p<n$, $n\ge 3$. Recall
that the case $n=2$, $b \in L_2(\Omega)$ and $\operatorname{div} b =
0$ can be considered as a regular case, see Remark \ref{r212}. Thus,
below we restrict ourselves to the case $n\ge 3$.

\subsection{Boundary value problem}

We have the following approximation result.

\begin{theorem}
\label{Approximation} Assume $b\in L_2(\Omega)$, $\operatorname{div}
b=0$, $f\in W^{-1}_2(\Omega)$, and let $u\in W^1_2(\Omega)$ be a
weak solution to \eqref{Equation}. Assume also $b_k \in L_n(
\Omega)$, $\operatorname{div} b_k=0$ is an arbitrary sequence
satisfying
$$
b_k \to b \quad \mbox{in} \quad L_2(\Omega),
$$
and let $u_k\in W^1_2(\Omega)$ be the unique weak solution to the
problem
\begin{equation}
\label{30}
\left\{ \quad \gathered -\Delta u_k +b_k\cdot \nabla u_k = f, \\
u_k|_{\partial \Omega} = \varphi , \endgathered \right.
\end{equation}
where $\varphi = u|_{\partial \Omega}$. Then
\begin{equation}
 u_k\to u \quad \mbox{in} \quad
L_q(\Omega) \quad \mbox{for any} \quad q<\frac n{n-2}.
\label{L_1-conv}
\end{equation}
Moreover, if $\varphi \in L_\infty (\partial\Omega)$ then
\begin{equation}
u_k\rightharpoonup u \quad \mbox{in} \quad W^1_2(\Omega).
\label{Weak_W^1_2}
\end{equation}
 Finally, if
$\varphi\equiv 0$ then  the energy inequality holds:
\begin{equation}
\int\limits_\Omega |\nabla u|^2 ~dx  \ \le  \ \langle f, u\rangle .
\label{Energy_inequality}
\end{equation}
\end{theorem}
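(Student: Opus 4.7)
The plan is to derive all three conclusions from a single duality argument anchored on Theorem \ref{Weak_Max_Principle}(2), whose $b$-independent $L_\infty$-estimate is the linchpin. Fix $q<n/(n-2)$ so that $q'>n/2$. For a test function $g\in L_{q'}(\Omega)$ I would introduce $\psi_k\in W^1_2(\Omega)\cap L_\infty(\Omega)$, defined as the unique weak solution (guaranteed by Theorem \ref{Existence_weak_smooth}) of the adjoint problem $-\Delta\psi_k-b_k\cdot\nabla\psi_k=g$ in $\Omega$, $\psi_k|_{\partial\Omega}=0$. Since $-b_k$ is still divergence-free and belongs to $L_n(\Omega)$, Theorem \ref{Weak_Max_Principle}(2) furnishes the crucial $k$-independent estimate $\|\psi_k\|_{L_\infty(\Omega)}\le C\|g\|_{L_{q'}(\Omega)}$.

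Setting $w_k=u_k-u\in\overset{\circ}{W}{^1_2}(\Omega)$, I would subtract the weak formulations of $u$ and $u_k$ (first extended from $C_0^\infty$ to test functions in $\overset{\circ}{W}{^1_2}(\Omega)\cap L_\infty(\Omega)$ by density, legitimate because $b\cdot\nabla u\in L_1$ pairs with bounded $\eta$, and similarly for $b_k$) to obtain
\[
\int_\Omega\nabla w_k\cdot\nabla\eta\,dx+\int_\Omega b_k\cdot\nabla w_k\,\eta\,dx+\int_\Omega(b_k-b)\cdot\nabla u\,\eta\,dx=0.
\]
Testing this with $\eta=\psi_k$, testing the equation for $\psi_k$ against $w_k$, and using the identity $\int b_k\cdot\nabla w_k\,\psi_k\,dx=-\int b_k\cdot\nabla\psi_k\,w_k\,dx$ (valid because $\operatorname{div}b_k=0$ and both $w_k$ and $\psi_k$ vanish on $\partial\Omega$) combine to yield the duality identity
\[
\int_\Omega g(u_k-u)\,dx=-\int_\Omega(b_k-b)\cdot\nabla u\,\psi_k\,dx.
\]
Bounding the right-hand side by H\"older together with the uniform $L_\infty$-bound on $\psi_k$ gives $\bigl|\int_\Omega g(u_k-u)\,dx\bigr|\le C\|b_k-b\|_{L_2}\|\nabla u\|_{L_2}\|g\|_{L_{q'}}$, and supremising over the unit ball of $L_{q'}(\Omega)$ yields \eqref{L_1-conv}.

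For \eqref{Weak_W^1_2}, when $\varphi\in L_\infty(\partial\Omega)$ I would fix a bounded extension $\tilde\varphi\in W^1_2(\Omega)\cap L_\infty(\Omega)$ and test the equation for $v_k=u_k-\tilde\varphi\in\overset{\circ}{W}{^1_2}(\Omega)$ with $v_k$ itself. The diagonal drift term $\int b_k\cdot\nabla v_k\,v_k\,dx$ vanishes by divergence-freeness, and the cross term $\int b_k\cdot\nabla\tilde\varphi\,v_k\,dx$ is converted by the same type of integration by parts into $-\int\tilde\varphi\,b_k\cdot\nabla v_k\,dx$, which is bounded by $\|\tilde\varphi\|_{L_\infty}\|b_k\|_{L_2}\|\nabla v_k\|_{L_2}$; this produces a uniform $W^1_2$-bound on $v_k$. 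Any weakly convergent subsequence of $u_k$ in $W^1_2$ must then agree with the strong $L_q$-limit $u$ from \eqref{L_1-conv}, so the full sequence satisfies $u_k\rightharpoonup u$ in $W^1_2(\Omega)$. Finally, for \eqref{Energy_inequality} with $\varphi\equiv 0$, testing the equation for $u_k\in\overset{\circ}{W}{^1_2}(\Omega)$ with $u_k$ yields $\|\nabla u_k\|_{L_2}^2=\langle f,u_k\rangle$ (the drift integral again vanishing), and passing to the limit using the weak $W^1_2$-convergence just established, together with weak lower semicontinuity of $\|\nabla\cdot\|_{L_2}$, delivers $\int_\Omega|\nabla u|^2\,dx\le\langle f,u\rangle$.

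The principal obstacle is exactly the uniform-in-$k$ control $\|\psi_k\|_{L_\infty}\le C\|g\|_{L_{q'}}$: the divergence-freeness of $b_k$ is indispensable here, because without Theorem \ref{Weak_Max_Principle}(2) the constant would depend on $\|b_k\|_{L_n}$, which may blow up even though $b_k\to b$ only in $L_2$. A secondary technicality is the density extension of test functions from $C_0^\infty$ to $\overset{\circ}{W}{^1_2}\cap L_\infty$ together with the justification of the various integrations by parts; since the relevant triple products sit right at the borderline of integrability, these are standard approximation arguments that must nevertheless be carried out carefully.
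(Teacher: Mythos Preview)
Your proof is correct and follows essentially the same approach as the paper. The paper isolates the duality step as a separate lemma (Theorem~\ref{L_1-theorem}), showing that the solution map $L_1(\Omega)\to L_q(\Omega)$ for the Dirichlet problem with divergence-free drift has norm independent of $b$, and then applies it to $v_k=u_k-u$ with right-hand side $f_k=(b-b_k)\cdot\nabla u$; you fold this duality directly into the main argument, but the key ingredient---the $b$-independent $L_\infty$-bound of Theorem~\ref{Weak_Max_Principle}(2) for the adjoint problem---and the proofs of \eqref{Weak_W^1_2} and \eqref{Energy_inequality} are identical.
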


The convergence \eqref{L_1-conv} is proved (in its parabolic
version) for $q=1$ in \cite[Proposition 2.4]{Zhang}. Note that the
proof in \cite{Zhang} uses the uniform Gaussian upper bound of the
Green functions of the operators $\partial_t u-\Delta u +b_k\cdot
\nabla u$ (sf. \cite{Aronson}). In Appendix we present an elementary
proof of Theorem \ref{Approximation} based on the maximum estimate
in Theorem \ref{Weak_Max_Principle} and duality arguments.

Theorem \ref{Approximation} has several consequences. The first of
them is the uniqueness of weak solutions, see \cite{Zhang} and
\cite{Zhikov}:

\begin{theorem}
Let $b\in L_2(\Omega)$, $\operatorname{div} b = 0$. Then a weak
solution to the problem \eqref{Dirichlet} is unique in the class
$W{^1_2}(\Omega)$.
\end{theorem}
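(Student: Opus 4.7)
The strategy is to exploit linearity to reduce to the homogeneous problem, and then to invoke the energy inequality supplied by Theorem \ref{Approximation}.

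Suppose $u_1, u_2 \in W^1_2(\Omega)$ are two weak solutions of \eqref{Dirichlet} with the same right-hand side $f \in W^{-1}_2(\Omega)$ and the same boundary data $\varphi \in W^{1/2}_2(\partial \Omega)$. Setting $w := u_1 - u_2$, Definition \ref{Def1} applied to $u_1$ and $u_2$ and subtraction yields
$$
\int\limits_\Omega \nabla w \cdot (\nabla \eta + b\,\eta)~dx \ = \ 0 \qquad \forall~\eta \in C_0^\infty(\Omega),
$$
so $w$ is a weak solution of \eqref{Equation} with $f \equiv 0$. Since the traces of $u_1$ and $u_2$ on $\partial \Omega$ coincide, $w \in \overset{\circ}{W}{^1_2}(\Omega)$, and hence $w$ is in fact a weak solution of the Dirichlet problem \eqref{Dirichlet} with both $f \equiv 0$ and $\varphi \equiv 0$.

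Now apply Theorem \ref{Approximation} to $w$: the assumptions $b \in L_2(\Omega)$ and $\operatorname{div} b = 0$ are in force, and since $\varphi \equiv 0$ the concluding energy inequality gives
$$
\int\limits_\Omega |\nabla w|^2 ~dx \ \le \ \langle 0, w\rangle \ = \ 0.
$$
Therefore $\nabla w = 0$ in $\Omega$, and together with $w \in \overset{\circ}{W}{^1_2}(\Omega)$ (or equivalently the Poincar\'e inequality) this forces $w \equiv 0$, i.e., $u_1 = u_2$.

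The proof is essentially immediate once Theorem \ref{Approximation} is in hand; there is no real obstacle beyond applying that theorem. The subtle point, already absorbed by Theorem \ref{Approximation}, is that one cannot establish the energy identity directly by testing \eqref{Integral_Identity_1} with $\eta = w$ and integrating the drift term by parts using $\operatorname{div} b = 0$: for $b \in L_2(\Omega)$ and $w \in W^1_2(\Omega)$ Cauchy--Schwarz only yields $b \cdot \nabla w \in L_1(\Omega)$, and the triple product $b \cdot \nabla w \cdot w$ is not a priori integrable, so the formal manipulation is not justified. The approximation procedure of Theorem \ref{Approximation} circumvents this difficulty by obtaining the energy inequality as a limit of the valid energy identities for the regularized solutions $u_k$ in \eqref{30}, using the weak convergence \eqref{Weak_W^1_2} together with the weak lower semicontinuity of the Dirichlet integral.
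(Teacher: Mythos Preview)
Your proof is correct. Both your argument and the paper's rely on Theorem \ref{Approximation}, but they invoke different conclusions of that theorem. The paper's one-line argument observes that, by \eqref{L_1-conv}, any weak solution $u$ to \eqref{Dirichlet} is the $L_q$-limit of the approximating sequence $u_k$ from \eqref{30}; since the $u_k$ are determined solely by $b_k$, $f$, and $\varphi$ (and not by $u$ itself), two weak solutions with the same data are limits of the same sequence and hence coincide. Your argument instead passes to the difference $w = u_1 - u_2$, reduces to the homogeneous problem, and applies the energy inequality \eqref{Energy_inequality}. The paper's route is a bit shorter and uses only the weakest conclusion of Theorem \ref{Approximation} (it never needs $\varphi$ bounded nor the energy inequality); your route is closer in spirit to classical energy-method uniqueness proofs, and your closing paragraph correctly identifies why the naive energy argument without approximation is not available when $b \in L_2$ only.
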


Indeed, $u$ is a $L_q$-limit of the approximating sequence $u_k$,
and such limit is unique. The alternative proof of the uniqueness
(which is in a sense ``direct'', i.e. it does not hang upon the
approximation result of Theorem \ref{Approximation}) for $b\in
L_2(\Omega)$, $\operatorname{div} b = 0$, can be found in
\cite{Zhikov} (see also some development in
 \cite{Surnachev}). Note that in \cite{Zhikov} it was also shown that
the uniqueness can break for weak solutions to the equation
\eqref{Equation1} if $b$ satisfy \eqref{Weak_drift} (actually a
little better than \eqref{Weak_drift}) and $\operatorname{div} b =
0$, but $b \notin L_2(\Omega)$.

Another consequence of Theorem \ref{Approximation} is the existence
of weak solution.

\begin{theorem}
\label{Existence} Let $b\in L_2(\Omega)$, $\operatorname{div} b =
0$. Then for any $f \in W^{-1}_2 (\Omega)$ and any $\varphi$
satisfying \eqref{BC_regular} there exists a weak solution to the
problem \eqref{Dirichlet}.
\end{theorem}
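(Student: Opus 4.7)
The plan is to construct $u$ as a weak limit of solutions to the approximating regular problems covered by Theorem \ref{Existence_weak_smooth}. First I would pick a sequence $b_k \in L_n(\Omega)$ with $\operatorname{div} b_k = 0$ and $b_k \to b$ in $L_2(\Omega)$; such approximations exist, e.g.\ by suitable mollification, since convolution preserves the divergence-free condition. For each $k$, Theorem \ref{Existence_weak_smooth} yields a unique weak solution $u_k \in W^1_2(\Omega)$ of $-\Delta u_k + b_k\cdot\nabla u_k = f$ in $\Omega$ with boundary data $\varphi$.

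The crux is to derive a uniform $W^1_2$--bound for $\{u_k\}$. I would fix an extension $\tilde\varphi \in W^1_2(\Omega)\cap L_\infty(\Omega)$ of $\varphi$ with $\|\tilde\varphi\|_{L_\infty(\Omega)} \le \|\varphi\|_{L_\infty(\partial\Omega)}$ and $\|\tilde\varphi\|_{W^1_2(\Omega)} \le c\|\varphi\|_{W^{1/2}_2(\partial\Omega)}$, set $v_k := u_k - \tilde\varphi \in \overset{\circ}{W}{^1_2}(\Omega)$, and test the equation for $u_k$ against $v_k$. The self-advection term $\int_\Omega b_k\cdot\nabla v_k\, v_k\,dx$ vanishes because $\operatorname{div} b_k = 0$ and $v_k|_{\partial\Omega}=0$. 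The remaining cross-term $\int_\Omega b_k\cdot\nabla\tilde\varphi\, v_k\,dx$ is the delicate one, since $b_k\in L_2$ and $\nabla\tilde\varphi\in L_2$ do not together lie in a dual space of $W^1_2$. Here one uses the further integration by parts
\[
\int_\Omega b_k\cdot\nabla\tilde\varphi\, v_k\,dx \;=\; -\int_\Omega \tilde\varphi\, b_k\cdot\nabla v_k\,dx,
\]
again justified by $\operatorname{div} b_k = 0$ together with $v_k|_{\partial\Omega}=0$. The right-hand side is bounded by $\|\tilde\varphi\|_{L_\infty}\|b_k\|_{L_2}\|\nabla v_k\|_{L_2}$, and this is precisely the point at which the hypothesis $\varphi\in L_\infty(\partial\Omega)$ enters in an essential way.

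Absorbing $\|\nabla v_k\|_{L_2}$ on the left and applying Poincar\'e's inequality yields a uniform bound $\|u_k\|_{W^1_2(\Omega)} \le C$. Then one extracts a subsequence with $u_k \rightharpoonup u$ in $W^1_2(\Omega)$ and $u_k \to u$ in $L_q(\Omega)$ for $q<2n/(n-2)$ by Rellich--Kondrachov. Passing to the limit in
\[
\int_\Omega \nabla u_k \cdot \nabla\eta\,dx + \int_\Omega b_k\cdot\nabla u_k\,\eta\,dx \;=\; \langle f,\eta\rangle,\qquad \eta\in C_0^\infty(\Omega),
\]
is then straightforward: the linear term converges by weak $W^1_2$--convergence, while in the convection term $b_k\eta \to b\eta$ strongly in $L_2$ and $\nabla u_k \rightharpoonup \nabla u$ weakly in $L_2$. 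The boundary condition $u|_{\partial\Omega}=\varphi$ is preserved by continuity of the trace operator under weak $W^1_2$--convergence.

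The main obstacle is thus the uniform energy estimate. If $\varphi$ were only taken in $W^{1/2}_2(\partial\Omega)$, then for $n\ge 3$ the product $b_k\cdot\nabla\tilde\varphi$ would only sit in $W^{-1}_{n'}$ with $n'=n/(n-1)<2$, strictly outside $W^{-1}_2(\Omega)$, and the standard reduction to a homogeneous Dirichlet problem followed by Lax--Milgram or Fredholm (as used in Theorem \ref{Existence_weak_smooth}) would break down. It is precisely the combination of $\operatorname{div} b = 0$ with the boundedness of $\varphi$ that rescues the energy inequality via the integration-by-parts identity above.
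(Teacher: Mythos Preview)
Your proposal is correct and follows essentially the same approach as the paper. Both arguments approximate $b$ by smoother divergence-free fields, solve the regular problems via Theorem \ref{Existence_weak_smooth}, test against $u_k-\tilde\varphi$, and use the key integration-by-parts identity (available because $\operatorname{div} b_k=0$ and $u_k-\tilde\varphi$ vanishes on $\partial\Omega$) to replace the uncontrollable product $b_k\cdot\nabla\tilde\varphi$ by a term bounded via $\|\tilde\varphi\|_{L_\infty}\|b_k\|_{L_2}\|\nabla(u_k-\tilde\varphi)\|_{L_2}$; the paper packages this energy estimate inside the proof of Theorem \ref{Approximation} and then quotes it, but the content is identical to what you wrote.
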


Theorem \ref{Existence} is proved in Appendix.

Finally, Theorem \ref{Approximation} allows one to establish the
global boundedness of weak solutions whenever the boundary data are
bounded.

\begin{theorem}
\label{Week_Max_Principle_2} Let $b\in L_2(\Omega)$,
$\operatorname{div} b = 0$, $f \in L_p(\Omega)$, $p>n/2$, and
$\varphi $ satisfies \eqref{BC_regular}. Assume $u\in
{W}{^1_2}(\Omega)$ is a weak solution to \eqref{Dirichlet}. Then
$u\in L_\infty(\Omega)$ and
\begin{equation}
\| u\|_{L_\infty(\Omega)} \ \le \  \|\varphi\|_{L_\infty(\partial
\Omega)} + C~ \|f\|_{L_p(\Omega)}, \label{Max_Pr_4}
\end{equation}
where the constant $C=C(n,p,\Omega)$ is independent on $b$.
\end{theorem}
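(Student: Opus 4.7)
The strategy is to reduce the claim to the regular case treated in Theorem \ref{Weak_Max_Principle} by means of the approximation result of Theorem \ref{Approximation}. First I would fix a sequence $b_k \in L_n(\Omega)$ with $\operatorname{div} b_k = 0$ and $b_k \to b$ in $L_2(\Omega)$; such a sequence exists by standard mollification (since convolution commutes with distributional divergence, after an appropriate solenoidal extension of $b$). Then let $u_k \in W^1_2(\Omega)$ denote the unique weak solution to the Dirichlet problem \eqref{30} with drift $b_k$ and the same data $f,\varphi$; existence and uniqueness are guaranteed by Theorem \ref{Existence_weak_smooth} and Corollary \ref{Unique}.

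The crucial point is that each $u_k$ satisfies the maximum estimate of Theorem \ref{Weak_Max_Principle}, and because $\operatorname{div} b_k = 0$ we may invoke part 2 of that theorem to conclude
$$
\|u_k\|_{L_\infty(\Omega)} \ \le \ \|\varphi\|_{L_\infty(\partial \Omega)} + C\,\|f\|_{L_p(\Omega)},
$$
with a constant $C=C(n,p,\Omega)$ that does \emph{not} depend on $k$. In particular, $\{u_k\}$ is uniformly bounded in $L_\infty(\Omega)$ by the quantity which appears on the right-hand side of \eqref{Max_Pr_4}.

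Finally, Theorem \ref{Approximation} gives $u_k \to u$ in $L_q(\Omega)$ for every $q < \frac{n}{n-2}$; passing to a subsequence we obtain $u_k \to u$ almost everywhere in $\Omega$. Combined with the uniform $L_\infty$ bound this immediately yields $|u(x)| \le \|\varphi\|_{L_\infty(\partial\Omega)} + C\,\|f\|_{L_p(\Omega)}$ for a.e.~$x \in \Omega$, which is precisely \eqref{Max_Pr_4}.

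The main obstacle is conceptual rather than technical: one must have, at the approximate level, a maximum estimate whose constant is independent of the drift. This is exactly what part 2 of Theorem \ref{Weak_Max_Principle} provides, and it is the reason the hypothesis $\operatorname{div} b=0$ enters essentially here even though the approximating drifts $b_k$ already lie in the regular class $L_n(\Omega)$. Once the uniform bound is in hand, the limit passage is entirely soft and relies only on the $L_q$-convergence supplied by Theorem \ref{Approximation}, with no need to invoke the weak $W^1_2$-convergence \eqref{Weak_W^1_2}.
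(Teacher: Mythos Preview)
Your proof is correct and follows essentially the same route as the paper: approximate $b$ by divergence-free $b_k\in L_n(\Omega)$, apply the $b$-independent maximum estimate of Theorem \ref{Weak_Max_Principle}, part 2, to the approximating solutions $u_k$, and pass to the limit along an a.e.\ convergent subsequence furnished by Theorem \ref{Approximation}. The paper's argument is the same, only more tersely stated.
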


Theorem \ref{Week_Max_Principle_2} is proved in Appendix.

\subsection{Local properties}

Note that any weak solution to \eqref{Equation} belonging to the
class $W^1_2(\Omega)$ can be viewed as a weak solution to the
problem \eqref{Dirichlet} with some $\varphi \in W^{1/2}_2
(\Omega)$.

\begin{theorem}
\label{Local_Boundedness} Assume $\operatorname{div} b = 0$ and
$$
b \in L_p (B) \quad \mbox{where} \quad p=2 \quad \text{if} \quad n =
3 \quad  \mbox{and} \quad p \ > \ \frac n2 \quad \text{if} \quad n
\ge 4 .
$$
Let $u\in W^1_2(B)$ be a weak solution to \eqref{Equation} in $B$
with some $f \in L_q (B)$, $q>n/2$.
 Then $ u\in L_\infty(B_{1/2})$ and
$$
\| u\|_{L_\infty(B_{1/2})} \ \le \ C~\Big( \| u\|_{W^1_2(B)} +
\|f\|_{L_q(B)}\Big)
$$
where the constant $C$ depends only on $n$, $p$, $q$ and
$\|b\|_{L_p(B)}$.
\end{theorem}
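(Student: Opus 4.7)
\emph{Strategy.} The plan is to carry out a Moser-type iteration, using the divergence-free condition to convert the drift term from a principal-order contribution into a lower-order one. Since $u$ is not known to be bounded a priori, I would work with the truncation $u^M:=\min(u_+,M)$ and obtain estimates with constants independent of $M$, letting $M\to\infty$ at the end; the bound for $u_-$ follows by applying the same argument to $-u$.

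\emph{Caccioppoli inequality and iteration.} Fix a cutoff $\zeta\in C_0^\infty(B)$ and an exponent $\beta\ge 1$, and test the integral identity \eqref{Integral_Identity_1} with $\eta:=\zeta^2(u^M)^{2\beta-1}$. This is admissible as a $W^1_2(B)\cap L_\infty(B)$-test function with compact support because $b\in L_2(B)$ makes the form $\int\nabla u\cdot(\nabla\eta+b\eta)\,dx$ continuous in this class. For the primitive $H$ of $s\mapsto s^{2\beta-1}$ on $[0,M]$, the function $H(u^M)\zeta^2$ also lies in $W^1_2(B)\cap L_\infty(B)$ with compact support, so the distributional identity $\operatorname{div}b=0$ gives $\int b\cdot\nabla(H(u^M)\zeta^2)\,dx=0$ by density, and consequently
\[
\int b\cdot\nabla u^M\,(u^M)^{2\beta-1}\zeta^2\, dx \;=\; -2\int H(u^M)\,\zeta\,(b\cdot\nabla\zeta)\, dx.
\]
Together with standard manipulation of the principal part this yields the Caccioppoli-type inequality
\[
\int\zeta^2\bigl|\nabla(u^M)^\beta\bigr|^2\, dx \;\le\; C\beta^2\!\left(\int|\nabla\zeta|^2(u^M)^{2\beta}\, dx + \int|b|\,|\nabla\zeta|\,\zeta\,(u^M)^{2\beta}\, dx + \int|f|\,\zeta^2 u_+^{2\beta-1}\, dx\right).
\]
The hypothesis on $p$ is equivalent to $p'<n/(n-2)$, i.e.\ $2p'<2^*:=2n/(n-2)$, so H\"older gives
\[
\int|b|\,|\nabla\zeta|\,\zeta\,(u^M)^{2\beta}\, dx \;\le\; \|b\|_{L_p(B)}\,\|\nabla\zeta\|_\infty\,\bigl\|\zeta(u^M)^{2\beta}\bigr\|_{L_{p'}(B)},
\]
and after writing $\|\zeta(u^M)^{2\beta}\|_{L_{p'}}=\|\zeta(u^M)^\beta\|_{L_{2p'}}\cdot\|(u^M)^\beta\|_{L_{2p'}}$, then interpolating $\|\zeta(u^M)^\beta\|_{L_{2p'}}$ between $L_2$ and $L_{2^*}$ and controlling the $L_{2^*}$-norm via the Sobolev inequality by $\|\nabla(\zeta(u^M)^\beta)\|_{L_2}$, Young's inequality allows a small fraction of the left-hand side to be absorbed, paying only a penalty polynomial in $\|b\|_{L_p}$ times $\int(u^M)^{2\beta}\, dx$. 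A further use of Sobolev produces a reverse-H\"older step of the schematic form $\|u^M\|_{L_{\beta\cdot 2^*}(B_{r'})}\le C(\beta,r-r',\|b\|_{L_p},\|f\|_{L_q})^{1/\beta}\,\|u^M\|_{L_{2\beta}(B_r)}+(\text{lower order})$. Iterating with $\beta_j=(n/(n-2))^j$ on a geometrically shrinking sequence of balls from $B$ down to $B_{1/2}$, and then letting $M\to\infty$, yields the desired $L_\infty$ bound.

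\emph{Main obstacle.} The principal difficulty is the quantitative control of the iteration: one must verify that the product $\prod_j C(\beta_j,\ldots)^{1/\beta_j}$ converges to a finite limit. This hinges on the strict inequality $2p'<2^*$, whose margin (vanishing exactly at $p=n/2$) determines the exponents in the Young step that absorbs the drift; a careful bookkeeping is needed both for the $\|b\|_{L_p}$-dependence and for the simultaneous interpolation handling of the right-hand side $f\in L_q(B)$, $q>n/2$, which contributes analogous subordinate terms at each scale. The divergence-free identity above is what makes the iteration feasible in the first place: without it, $\int b\cdot\nabla u\,(u^M)^{2\beta-1}\zeta^2\, dx$ would have to be estimated as a first-order quantity and the scheme would demand $b\in L_n$ in the spirit of Theorem~\ref{Holder}.
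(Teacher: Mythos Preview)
Your core mechanism matches the paper's: the divergence-free condition is used to throw the drift term onto $\nabla\zeta$, so that the Caccioppoli inequality reads $\int|\nabla(\zeta w)|^2\le C\int|w|^2(|\nabla\zeta|^2+|b|\,|\nabla\zeta|)$ with $w=|u|^{(\beta+2)/2}$, and the condition $p>n/2$ (equivalently $2p'<2^*$) then drives a Moser iteration. There are, however, two genuine differences worth noting.

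\textbf{Justification step.} The paper does \emph{not} truncate. It first runs the iteration assuming $u\in C^\infty(B)$, and then removes this assumption by an approximation argument that relies on Theorem~\ref{Approximation}: one localises $v=\zeta u$, replaces $b$ by smooth divergence-free $b_k\to b$ in $L_p$, solves $-\Delta v_k+b_k\cdot\nabla v_k=g$ with $v_k|_{\partial B}=0$, obtains the $L_\infty$ bound for the smooth $v_k$, and passes to the limit using $v_k\to v$ a.e. This buys a clean iteration with no truncation bookkeeping, at the cost of invoking the global approximation machinery already developed. Your route via $u^M$ is self-contained, but be careful: in the integral identity the drift term is $\int b\cdot\nabla u\,\zeta^2(u^M)^{2\beta-1}$, with the full $\nabla u$, not $\nabla u^M$; your displayed identity involving $H(u^M)$ accounts only for the latter. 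The discrepancy, supported on $\{u>M\}$, is $M^{2\beta-1}\!\int b\cdot\nabla(u-M)_+\,\zeta^2$, which after another use of $\operatorname{div}b=0$ becomes $-M^{2\beta-1}\!\int(u-M)_+\,b\cdot\nabla(\zeta^2)$. This is controllable (it is dominated by $\int u_+^{2\beta}|b|\,|\nabla\zeta|$), but you should say so explicitly; as written, the Caccioppoli step is incomplete. An alternative is the usual $C^1$ modification $G_M(s)$ of $s\mapsto s^{2\beta-1}$ that is linear beyond $M$.

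\textbf{Estimation of the drift term.} Your interpolation--absorption scheme for $\int|b|\,|\nabla\zeta|\,\zeta(u^M)^{2\beta}$ is workable but heavier than necessary. In the paper the term $\int|w|^2|b|\,|\nabla\zeta|$ is simply bounded by $\|b\|_{L_p(B)}\|\nabla\zeta\|_\infty\|w\|_{L_{2p'}(B_R)}^2$ via H\"older, with no absorption; Sobolev on the left then gives $\|w\|_{L_{2^*}(B_r)}$ directly, and since $2^*>2p'$ the iteration closes immediately with the explicit gain $\chi=\frac{n(p-1)}{p(n-2)}>1$. This also makes the convergence of $\prod_m C_m^{1/\chi^{m-1}}$ and the precise dependence on $\|b\|_{L_p}$ transparent, addressing your ``main obstacle'' without effort.
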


Theorem \ref{Local_Boundedness} was proved (in the parabolic
version) in \cite{Zhang}. For the reader's convenience we present
the proof of this theorem in Appendix.

Let us consider the following

\begin{example}\label{Example_1}   Assume $n\ge 4$ and put
$$
u(x) = \ln r, \qquad b = (n-3) \left(~\frac 1r ~{\bf e}_r -
(n-3)~\frac{z}{ r^2} ~{\bf e}_z~\right),
$$
where $r^2 = x_1^2 + ... + x_{n-1}^2$, $z=x_n$, and ${\bf e}_r$,
${\bf e}_z$ are the basis vectors of the corresponding cylindrical
coordinate system in $\mathbb  R^n$. Then $u \in
\overset{\circ}{W}{^1_2}(\Omega)$, and
$$
-\Delta u + b\cdot \nabla u \ = 0.
$$
Next, $\operatorname{div} b = 0$, $b(x) = O(r^{-2})$ near the axis
of symmetry, and hence
$$
b\in L_p(B) \quad \mbox{for any} \quad p  \ < \ \frac {n-1}2 .
$$
\end{example}

Clearly, the assumption $b\in L_2(\Omega)$ leads to the restriction
$n\ge 6$. So, for divergence-free drifts $b\in L_2(\Omega)$ we have
the following picture. Assume $u\in W^1_2(\Omega)$ is a weak
solution to \eqref{Dirichlet} with $f\in L_p (\Omega)$, $p>n/2$.
Theorem \ref{Week_Max_Principle_2} means that
$$
\varphi \in L_{\infty}(\partial\Omega) \cap
W^{1/2}_2(\partial\Omega) \qquad \Longrightarrow \qquad u\in
L_{\infty}(\Omega) \quad \mbox{for any \ $n\ge 2$} .
$$
The Example \ref{Example_1} shows that for general $\varphi$ we have
$$
\varphi \in W^{1/2}_2(\partial\Omega) \quad \Longrightarrow \quad
\left\{ \ \
\begin{array}l  \mbox{if $n\le 3$ then } u\in L_{\infty, loc}(\Omega),
 \\ \mbox{if $n\ge 6$ then it is possible } u\not\in
L_{\infty, loc}(\Omega),    \\ \mbox{if } \mbox{$n= 4,5$ -- open
questions}. \ \
\end{array}\right.
$$

Theorem \ref{Week_Max_Principle_2} and Example \ref{Example_1}
together establish an interesting phenomena: for  drifts $b\in
L_2(\Omega)$, $\operatorname{div} b = 0$, the property of the
elliptic operator in \eqref{Equation} to improve the ``regularity''
of weak solutions (in the sense that every weak solution is locally
bounded) depends on the behavior of a weak solution on the boundary
of the domain. If the values of $\varphi:=u|_{\partial \Omega}$ on
the boundary are bounded then this weak solution must be bounded as
Theorem \ref{Week_Max_Principle_2} says. On the other hand, if the
function $\varphi$ is unbounded on $\partial \Omega$ then the weak
solution can be unbounded even near internal points of the domain
$\Omega$ as Example \ref{Example_1} shows. To our opinion such a
behavior of solutions to an elliptic equation is unexpected.
Allowing  some abuse of language we can say that {\it non-regularity
of the drift can destroy the hypoellipticity of the operator}.

Theorem \ref{Week_Max_Principle_2} impose some restrictions on the
structure of the set of singular points of weak solutions. Namely,
let us define a {\it singular point} of a weak solution as a point
for which the weak solution is unbounded in any its neighborhood,
and then define the {\it singular set} of a weak solution as the set
of all its singular points. It is clear that the singular set is
closed. Theorem \ref{Week_Max_Principle_2} shows that if for some
weak solution its singular set is non-empty then its 1-dimensional
Hausdorff measure  must be positive.

\begin{theorem}
\label{Structure} Let $b\in L_2(\Omega)$, $\operatorname{div} b =
0$, and let $u\in W^1_2(\Omega)$ be a weak solution to
\eqref{Equation} with $f \in L_p(\Omega)$, $p>n/2$. Denote by
$\Sigma\subset \bar \Omega$ the singular set of $u$ and assume
$\Sigma\cap\Omega\not=\emptyset$. Then any point of the set
$\Sigma\cap \Omega$ never can be surrounded by any smooth closed
$(n-1)$-dimensional surface $S\subset \bar \Omega$ such that
$u|_{S}\in L_\infty(S)$. In particular, this means that
\begin{equation}
 \mathcal
H^1(\Sigma)>0, \quad  \Sigma\cap\partial \Omega\not=\emptyset,
\label{Hausdorff}
\end{equation}
where $\mathcal H^1$ is one-dimensional Hausdorff measure in
$\mathbb R^n$.
\end{theorem}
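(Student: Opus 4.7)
The plan is to establish the main assertion --- that a singular point in $\Sigma\cap\Omega$ cannot be surrounded by any smooth closed surface on which $u$ is bounded --- by a direct application of Theorem~\ref{Week_Max_Principle_2} to a suitable subdomain, and then to deduce both parts of \eqref{Hausdorff} by exhibiting explicit surrounding surfaces.

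For the main claim I would argue by contradiction. Assume some $x_0\in\Sigma\cap\Omega$ is surrounded by a smooth closed $(n-1)$-surface $S\subset\bar\Omega$ with $u|_S\in L_\infty(S)$, and let $U$ denote the bounded open region enclosed by $S$ that contains $x_0$; in the typical case $S\subset\Omega$ one has $\bar U\subset\Omega$. Since $u\in W^1_2(\Omega)$, the trace of $u$ on the smooth surface $S$ lies in $W^{1/2}_2(S)$, and by hypothesis it also lies in $L_\infty(S)$, so $\varphi:=u|_S$ satisfies \eqref{BC_regular} on $\partial U=S$. Restricting the identity \eqref{Integral_Identity_1} to test functions $\eta\in C^\infty_0(U)$ shows that $u$ is a weak solution to the Dirichlet problem \eqref{Dirichlet} in $U$ with boundary data $\varphi$, and Theorem~\ref{Week_Max_Principle_2} applied in $U$ gives $u\in L_\infty(U)$, contradicting $x_0\in U\cap\Sigma$.

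For the first part of \eqref{Hausdorff}, I would argue by contradiction: if $\Sigma\cap\partial\Omega=\emptyset$, then $\Sigma$, being closed in $\bar\Omega$, is a compact subset of $\Omega$, and one can choose a smooth bounded domain $U$ with $\Sigma\subset U$, $\bar U\subset\Omega$, and $\partial U\cap\Sigma=\emptyset$. Every point of the compact set $\partial U$ admits an open neighborhood in $\Omega\setminus\Sigma$ on which $u$ is essentially bounded; a finite subcover produces an open neighborhood of $\partial U$ on which $u\in L_\infty$, so $u|_{\partial U}\in L_\infty(\partial U)$. Since $\partial U$ surrounds any point of $\Sigma$, this contradicts the main assertion. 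For the second part I would suppose $\mathcal H^1(\Sigma)=0$ and fix $x_0\in\Sigma\cap\Omega$. The $1$-Lipschitz map $x\mapsto|x-x_0|$ sends $\Sigma$ to a subset of $[0,\infty)$ of zero $\mathcal H^1$-measure, hence of zero Lebesgue measure, so for almost every sufficiently small $r>0$ with $\bar B_r(x_0)\subset\Omega$ one has $S_r(x_0)\cap\Sigma=\emptyset$. The same finite covering argument then gives $u|_{S_r(x_0)}\in L_\infty(S_r(x_0))$, and the sphere $S_r(x_0)$ surrounds $x_0$, again contradicting the main assertion.

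The principal obstacle I anticipate is bookkeeping rather than mathematics: one has to verify that Theorem~\ref{Week_Max_Principle_2} genuinely applies to the constructed subdomain $U$. This requires smoothness of $\partial U$ (inherited from $S$), the inclusions $f|_U\in L_p(U)$ and $b|_U\in L_2(U)$ together with $\operatorname{div} b=0$ in $U$ (both automatic), and $u|_{\partial U}\in L_\infty(\partial U)\cap W^{1/2}_2(\partial U)$ (established above). A mild subtlety concerns the case in which $S$ touches $\partial\Omega$ in the general main assertion: there one must take $U$ as the connected component of the inside of $S$ containing $x_0$ intersected with $\Omega$, and separately verify that the trace of $u$ on the portion $\partial U\cap\partial\Omega$ is bounded. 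This issue does not arise in the applications to \eqref{Hausdorff}, where the surfaces used lie strictly inside $\Omega$.
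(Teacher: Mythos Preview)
Your argument is correct and rests on the same idea as the paper's: the main assertion is exactly an application of Theorem~\ref{Week_Max_Principle_2} to the enclosed subdomain, and the consequences in \eqref{Hausdorff} follow because every sphere $\partial B_r(x_0)$ with $\bar B_r(x_0)\subset\Omega$ must meet $\Sigma$. The only difference is in packaging. For $\mathcal H^1(\Sigma)>0$ the paper argues directly with covers: given any countable cover of $\Sigma$ by balls $B_{\rho_i}(y_i)$, it projects radially onto the segment $[x_0,z_0]$ joining $x_0$ to a nearest boundary point and shows the translated balls $B_{\rho_i}(z_i)$ cover that segment, yielding the quantitative bound $\mathcal H^1(\Sigma)\ge\operatorname{dist}(x_0,\partial\Omega)$. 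Your contrapositive via the $1$-Lipschitz map $x\mapsto|x-x_0|$ encodes the same projection and, if you note that its image contains the full interval $[0,\operatorname{dist}(x_0,\partial\Omega))$, recovers the same lower bound. You also spell out the proof of $\Sigma\cap\partial\Omega\neq\emptyset$ via an enclosing smooth domain, which the paper leaves implicit; this is a welcome addition. (Minor point: what you call the ``first part'' and ``second part'' of \eqref{Hausdorff} are swapped relative to the displayed order.)
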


\begin{proof} The first assertion is clear. Let us prove
\eqref{Hausdorff}. Assume $\Sigma\cap \Omega\not=\emptyset$ and
$x_0\in \Sigma\cap \Omega$.  Denote $d:=\operatorname{dist}\{ x_0,
\partial \Omega\}$. Let $z_0\in \partial \Omega$ be a point such
that $|z_0-x_0|=d$ and denote by $[x_0,z_0]$ the straight line
segment connecting $x_0$ with $z_0$. Let us take arbitrary
$\delta>0$ and consider any countable covering of $\Sigma$ by open
balls $\{B_{\rho_i}(y_i)\}$ such that $\rho_i\le \delta$. For any
$i$ denote $r_i:=|x_0-y_i|$. If $r_i\le d$ then denote $z_i:=
[x_0,z_0]\cap
\partial B_{r_i} (x_0)$. By Theorem \ref{Week_Max_Principle_2} for
any $r\le d$ we have $\Sigma\cap \partial B_r(x_0)\not=\emptyset$.
Therefore,
$$
[x_0,z_0] \ \subset \ \bigcup\limits_{r_i\le d} B_{\rho_i}(z_i).
$$
This inclusion means that
$$
\mathcal H^1(\Sigma) \ \ge \ \mathcal H^1\left([x_0,z_0]\right) = d
> 0.
$$
\end{proof}

Theorem  \ref{Structure} in particular implies that no isolated
singularity is possible. This exactly what  Example \ref{Example_1}
demonstrates: the singular set in this case is the axis of symmetry.

Note that the divergence free condition brings significant
improvements into the local boundedness results. Without the
condition $\operatorname{div} b = 0$ one can prove local boundedness
of weak solutions to \eqref{Equation} only for $b\in L_n(\Omega)$
($n\ge 3$), while if $\operatorname{div} b = 0$ the local
boundedness is valid for any $b\in L_p(\Omega)$ with $p>\frac n2$.
Note also that for the moment of writing of this paper we can say
nothing about analogues of neither Theorem \ref{Local_Boundedness}
nor Example \ref{Example_1} if $p\in [\frac{n-1}2, \frac n2]$. {\it
We state this problem as an open question.}

The final issue we need to discuss is the problem of further
regularity of solutions to the equation \eqref{Equation}. The
example of a bounded weak solution which is not locally continuous
was constructed originally in \cite{SSSZ} for $n=3$ and $b \in L_1
(\Omega)$, $\operatorname{div} b = 0$ (actually the method of
\cite{SSSZ} allowed to extend their example for $b \in L_p$, $p\in
[1,2)$). Later the first author in \cite{Filonov} generalized this
example for all $n\ge 3$ and for all $p\in [1,n$).

\begin{theorem}
\label{Filonov_2013} Assume $n\ge 3$, $p < n$. Then there exist
$b\in L_p(B)$ satisfying $\operatorname{div} b = 0$ and a weak
solution $u$ to \eqref{Equation} with $f\equiv 0$ such that $u\in
W^1_2(B)\cap L_\infty(B)$ but $u\not \in C(\bar B_{1/2})$.
\end{theorem}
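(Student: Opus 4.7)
The plan is a multi-scale construction. The key ingredient is a smooth building block on the reference annulus $A = \{1 \le |x| \le 2\}$: a function $u_0$ and a divergence-free vector field $b_0$ solving $-\Delta u_0 + b_0\cdot\nabla u_0 = 0$ in $A$, with $u_0 \equiv 0$ in a collar of $\{|x|=1\}$, $u_0 \equiv 1$ in a collar of $\{|x|=2\}$, and $b_0$ also vanishing in those collars. To produce such a block I first choose a smooth $u_0$ with the prescribed constant--monotone--constant radial profile (ample freedom), and then build $b_0$ only on the transition region $\{\nabla u_0 \ne 0\}$ by writing $b_0 = (\Delta u_0 / |\nabla u_0|^2)\,\nabla u_0 + c$ and selecting a tangential correction $c$ so that $\operatorname{div} b_0 = 0$ and $b_0$ decays smoothly to $0$ at the edge of the transition region. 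The dimension count is favorable for $n \ge 3$: the algebraic constraint kills one component of $b_0$ and the divergence-free condition is a single scalar PDE, leaving genuine freedom.

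\textbf{Rescaling and pasting.} For $k \ge 0$ set $A_k = \{2^{-k-1} \le |x| \le 2^{-k}\}$ and
\[
 u(x) = \begin{cases} u_0(2^{k+1}x), & k \text{ even},\\ 1 - u_0(2^{k+1}x), & k \text{ odd},\end{cases}
 \qquad
 b(x) = 2^{k+1}\, b_0(2^{k+1}x).
\]
Both $u_0$ and $1 - u_0$ solve $-\Delta u_0 + b_0 \cdot \nabla u_0 = 0$ with the same $b_0$ (by direct computation, since $\Delta(1-u_0) = -\Delta u_0$ and $\nabla(1-u_0) = -\nabla u_0$), and both the equation and $\operatorname{div} b = 0$ are invariant under this dilation, so $(u,b)$ classically solves the equation on each $A_k$. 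The collar condition makes $b$ and $\nabla u$ vanish in a neighborhood of each sphere $|x| = 2^{-k}$, so the pasted fields agree smoothly across every interface; combined with the alternating orientation (which makes $u$ continuous across each interface) this yields $u \in W^1_2(B) \cap C^{\infty}(B \setminus \{0\})$, $\operatorname{div} b = 0$ in $\mathcal{D}'(B)$, and the integral identity \eqref{Integral_Identity_1} on all of $B$.

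\textbf{Norm estimates, discontinuity, main obstacle.} A change of variable $y = 2^{k+1} x$ gives
\[
 \|b\|_{L_p(A_k)}^p = 2^{(k+1)(p-n)}\|b_0\|_{L_p(A)}^p,
 \qquad
 \|\nabla u\|_{L_2(A_k)}^2 = 2^{(k+1)(2-n)}\|\nabla u_0\|_{L_2(A)}^2,
\]
and summing over $k \ge 0$ the first series converges precisely when $p < n$ and the second since $n \ge 3$. Therefore $b \in L_p(B)$, $u \in W^1_2(B) \cap L_\infty(B)$ with $0 \le u \le 1$. On the sphere $|x| = 2^{-k}$ one has $u \equiv 1$ for $k$ even and $u \equiv 0$ for $k$ odd, so $u$ has no continuous extension to $0$ and hence $u \notin C(\bar B_{1/2})$. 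The main obstacle is the construction of the building block, specifically solving the coupled algebraic-differential system for $b_0$ on the transition region with the prescribed vanishing behavior at its boundary; once this is accomplished, everything else is scale-invariance and a single geometric series whose convergence is exactly the $p<n$ condition.
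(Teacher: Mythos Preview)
The paper does not prove this theorem here; it refers to \cite{SSSZ} and \cite{Filonov}. Your dyadic scheme --- rescaled annular pieces alternating between $u_0$ and $1-u_0$, with $\|b\|_{L_p}$ controlled by a geometric series convergent exactly for $p<n$ --- is indeed the skeleton of those constructions, and your norm and discontinuity computations are correct once a building block is in hand.

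The genuine gap is the building block itself. You require a smooth $u_0$ on $A$ with $u_0\equiv 0$ on an open collar of $\{|x|=1\}$ and $u_0\equiv 1$ on an open collar of $\{|x|=2\}$, together with a \emph{smooth} (hence bounded) $b_0$ satisfying $-\Delta u_0+b_0\cdot\nabla u_0=0$. Such a pair cannot exist: boundedness of $b_0$ gives $|\Delta u_0|\le\|b_0\|_{L_\infty}|\nabla u_0|$, and by the strong unique continuation property for this differential inequality a solution that is constant on an open subset of the connected annulus is constant everywhere --- contradicting $u_0=0$ on one collar and $u_0=1$ on the other. Your own formula already exhibits the obstruction for a radial profile $u_0=\psi(r)$: the component of $b_0$ along $\nabla u_0$ is forced to equal $\psi''/\psi'+(n-1)/r$, and $\psi''/\psi'$ can never stay bounded near a zero of $\psi'$ (if $|\psi''/\psi'|\le M$ there, then $\psi'(r)=\psi'(r_0)\exp\int_{r_0}^{r}\psi''/\psi'$ would be bounded away from $0$). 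The tangential correction $c$ does not touch this forced component, so the dimension count is beside the point. The constructions in \cite{SSSZ} and \cite{Filonov} avoid the obstruction by never making the solution constant on an open set; for example one may take $u(x)=\eta(x/|x|)$ homogeneous of degree zero with $\eta$ non-constant on $S^{n-1}$ and $b(x)=|x|^{-1}\beta(x/|x|)$ with $\beta$ a tangential divergence-free field on the sphere, which reduces everything to a problem on $S^{n-1}$ where $\eta$ is nowhere locally constant. Then $u\in W^1_2(B)\cap L_\infty(B)$ precisely for $n\ge 3$, $b\in L_p(B)$ precisely for $p<n$, and discontinuity at the origin is immediate.
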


The latter result shows that if one is interested in the local
continuity of weak solutions then the assumption $b\in L_n(\Omega)$
can not be weakened in the Lebesgue scale and the structure
condition $\operatorname{div} b = 0$ does not help in this
situation.

It is not difficult to construct  also a weak solution to
\eqref{Equation} which is continuous but not H\" older continuous.

\begin{example} \label{Example_2}   Assume $n\ge 4$ and take
$$
u(x) = \frac 1{\ln r}, \quad  b =  \left(\frac {n-3}r - \frac 2{r\ln
r}\right) {\bf e}_r  \ + \  \left( \frac {(n-3)^2}{r^2} -
\frac{2(n-3)}{r^2 \ln r} -\frac{2}{r^2\ln^2 r}~\right)z~{\bf e}_z .
$$
Here $r^2 = x_1^2 + ... + x_{n-1}^2$, $z=x_n$, and ${\bf e}_r$,
${\bf e}_z$ are the basis vectors of the cylindrical coordinate
system. Then $u\in W^1_2(B_{1/2})\cap C(B_{1/2})$, $-\Delta u+b\cdot
\nabla u = 0 $, $\operatorname{div} b=0$ in $B_{1/2}$ and $b\in
L_p(B_{1/2})$ for any $p<\frac {n-1}2$.
\end{example}

\medskip
Thus, for weak solutions of \eqref{Equation} with $b\in
L_2(\Omega)$, $\operatorname{div} b = 0$, in large space dimensions
(at least for $n\ge 6$) the following sequence of implications can
break at any step:
$$
u\in W^1_2(\Omega) \ \ \ \not\!\Longrightarrow \ \ u\in L_{\infty,
loc}(\Omega) \  \ \ \not\!\Longrightarrow \ \ u\in C_{loc}(\Omega) \
\ \ \not\!\Longrightarrow \ \ u\in C^\alpha_{loc}(\Omega).
$$

\section{Beyond the $\boldsymbol{L_p}$--scale}

Theorem \ref{Filonov_2013} shows that in order to obtain the local
continuity of weak solutions to \eqref{Equation} for drifts weaker
than $b\in L_n(\Omega)$ one needs to go beyond the Lebesgue scale.

We start with the question of the boundedness of the operator $T$
defined by the formula \eqref{212}. The necessary and sufficient
condition on $b$ is obtained in \cite{MV} in the case $\Omega =
{\mathbb  R}^n$.

\begin{theorem}
\label{t41} The inequality \eqref{2125} holds true if and only if
the drift $b$ can be represented as a sum $b = b_0 + b_1$, where the
function $b_0$ is such that
\begin{equation}
\label{41} \int\limits_{{\mathbb  R}^n} |b_0|^2 |\eta|^2~dx \ \le \
C~\int\limits_{{\mathbb  R}^n} |\nabla\eta|^2~dx, \qquad
\forall~\eta\in C_0^\infty({\mathbb  R}^n) ,
\end{equation}
$b_1$ is divergence-free, $\operatorname{div} b_1 = 0$, and $b_1 \in
BMO^{-1}(\mathbb  R^n)$. It means that $b_1(x) = \operatorname{div}
A(x)$, $A(x)$ is a skew-symmetric matrix, $A_{ij} = - A_{ji}$, and
$A_{ij} \in BMO ({\mathbb  R}^n)$.
\end{theorem}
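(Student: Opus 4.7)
I will sketch the two directions separately; the sufficiency is essentially a duality argument with a div-curl cancellation, while the necessity is the deeper part and requires constructing the decomposition from the bilinear form itself.

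\textbf{Sufficiency.} Assume $b = b_0 + b_1$ with $b_0$ satisfying \eqref{41} and $b_1 = \operatorname{div} A$ with $A = -A^T$, $A_{ij}\in BMO(\mathbb R^n)$. For the $b_0$ part, Cauchy--Schwarz followed by \eqref{41} immediately gives
$$
\Bigl|\int_{\mathbb R^n}\nabla u \cdot b_0\,\eta\,dx\Bigr| \le \|\nabla u\|_{L_2} \|b_0\eta\|_{L_2} \le C\|\nabla u\|_{L_2}\|\nabla\eta\|_{L_2}.
$$
For $b_1$, writing $(b_1)_j = \partial_i A_{ij}$ and integrating by parts yields
$$
\int_{\mathbb R^n} \nabla u \cdot b_1\,\eta\,dx = -\sum_{i,j}\int_{\mathbb R^n} A_{ij}\bigl(\partial_i\partial_j u\,\eta + \partial_j u\,\partial_i\eta\bigr)\,dx.
$$
The first sum vanishes because $A$ is skew-symmetric. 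Using skew-symmetry once more to antisymmetrize the remainder produces the Jacobian-type expression $\partial_j u\,\partial_i\eta - \partial_i u\,\partial_j\eta$, which by the div-curl lemma of Coifman--Lions--Meyer--Semmes lies in the Hardy space $\mathcal H^1(\mathbb R^n)$ with norm controlled by $\|\nabla u\|_{L_2}\|\nabla\eta\|_{L_2}$. The $\mathcal H^1$--$BMO$ duality then closes the estimate.

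\textbf{Necessity.} Given \eqref{2125}, I would build the decomposition by a Helmholtz-type splitting: set $\phi := (-\Delta)^{-1}(-\operatorname{div} b)$ (understood via Riesz transforms) and define
$$
b_0 := \nabla\phi, \qquad b_1 := b - \nabla\phi,
$$
so that $\operatorname{div} b_1 = 0$ automatically. Since $b_1$ is divergence-free on $\mathbb R^n$, a Poincar\'e-lemma type argument (e.g.\ via the Bogovskii operator applied componentwise, or $A_{ij} := (-\Delta)^{-1}(\partial_i (b_1)_j - \partial_j (b_1)_i)$) furnishes a skew-symmetric matrix $A$ with $\operatorname{div} A = b_1$. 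It remains to extract from \eqref{2125} that $A\in BMO$ and that $b_0=\nabla\phi$ satisfies \eqref{41}.

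For these two conclusions one tests \eqref{2125} against carefully chosen pairs $(u,\eta)$. To get \eqref{41} one essentially tests against $u$ solving $-\Delta u = b_0\eta$, which converts the trace inequality into the bilinear bound on the gradient part of $b$; the orthogonality of the Helmholtz decomposition ensures only $b_0$ contributes. To get $A\in BMO$ one exploits the John--Nirenberg characterization: for a cube $Q$ and a test pair $(u,\eta)$ localized to $Q$, the identity derived in the sufficiency step shows that the bilinear form controls integrals of the form $\int A_{ij}(\partial_j u\,\partial_i\eta - \partial_i u\,\partial_j\eta)$; choosing $u,\eta$ such that this Jacobian reproduces the characteristic function of $Q$ (up to Hardy space normalization) forces a BMO bound on $A$ by the converse direction of $\mathcal H^1$--$BMO$ duality.

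\textbf{Main obstacle.} The sufficiency is a clean div-curl argument once the decomposition is given. The difficulty concentrates in the necessity, specifically in realizing the skew-symmetric potential $A$ as a $BMO$ function: one must produce, for each cube, a test pair whose Jacobian approximates an $\mathcal H^1$-atom adapted to that cube, and simultaneously control the $b_0$ contribution so that only the skew piece survives in the bound. This is where the full strength of \cite{MV} enters, and I would refer to that paper for the delicate extremal construction.
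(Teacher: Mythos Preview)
The paper does not prove Theorem~\ref{t41}; it merely quotes the result from Maz'ya--Verbitsky \cite{MV} and uses it as background. There is therefore no in-paper argument to compare your sketch against.

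That said, your outline is broadly in the spirit of \cite{MV}: the sufficiency via the Coifman--Lions--Meyer--Semmes div-curl lemma and $\mathcal H^1$--$BMO$ duality is the standard route and is carried out correctly. For the necessity, your Helmholtz splitting $b_0=\nabla\phi$, $b_1=b-\nabla\phi$ is the right first move, and you are honest that the hard step---showing the skew potential $A$ lands in $BMO$---requires the machinery of \cite{MV}. One point to be careful about: your suggestion to ``choose $u,\eta$ such that the Jacobian reproduces the characteristic function of $Q$'' is heuristic; characteristic functions of cubes are not literally Jacobians of $W^1_2$ functions, and what \cite{MV} actually does is closer to a paraproduct/Carleson-measure characterization than an atom-by-atom construction. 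So your necessity sketch identifies the correct obstacle but does not yet resolve it, and you rightly defer to the reference.
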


Here $BMO (\Omega)$ is the space of functions $f$ with {\it bounded
mean oscillation}, i.e.
$$
\sup_{{\tiny \begin{array}c x\in \Omega \\ 0<r< \infty\end{array}}}
\frac1{r^n} \int\limits_{B_r(x)\cap \Omega} |f(y) - (f)_{B_r(x)\cap
\Omega}|~dy < \infty, \quad \text{where} \quad (f)_{\omega} =
\frac1{|\omega|} \int\limits_{\omega} f(y)~dy.
$$
Clearly, each divergence-free vector $b_1$ can be represented as
$b_1 = \operatorname{div} A$ with a skew-symmetric matrix $A(x)$.

This Theorem mentions that the behaviour of the bilinear form
$\int\limits_\Omega \nabla u\cdot b~\eta~dx$ already distinguish
between general drifts and divergence-free drifts. First, let us
discuss general drifts. If $b$ satisfies \eqref{210} then it
satisfies the estimate \eqref{41} too. But we can not use the
condition \eqref{41} instead \eqref{210} for the regularity theory,
as the example of Remark \ref{r211} shows. Indeed, for functions
satisfying
\begin{equation}
\label{42} |b(x)| \ \le \ \frac{C}{|x|}
\end{equation}
the estimate \eqref{41} is fulfilled by the Hardy inequality.

On the other hand, the case of the drift $b$ having a one-point
singularity (say, at the origin) with the asymptotics which includes
homogeneous of degree $-1$ functions like \eqref{42}, is also
interesting. There are several papers, see \cite{Lis_Zhang},
\cite{Sem}, \cite{SSSZ} and \cite{NU}, dealing with different
classes of divergence-free drifts which cover \eqref{42}. All these
papers contain also the results for parabolic equation \eqref{113},
but we discuss only (simplified) elliptic versions of them. We
address the interested readers to the original papers.

The approach of \cite{SSSZ} seems to be the most general one. Assume
$b \in BMO^{-1}(\Omega)$ and $\operatorname{div} b = 0$. In this
case we understand the equation $-\Delta u + b\cdot \nabla u \ = 0$
in the sense of the integral identity
\begin{equation}
\label{43} \int_\Omega \left(\nabla u \cdot \nabla \eta + A \nabla u
\cdot \nabla \eta\right) dx = 0 \qquad \forall~\eta\in
C_0^\infty(\Omega),
\end{equation}
where the skew-symmetric matrix $A \in BMO (\Omega)$ is defined via
$\operatorname{div} A(x) = b(x)$.

\begin{theorem}
\label{t42} Let $b \in BMO^{-1}(\Omega)$ and $\operatorname{div} b =
0$. Then

{\rm 1)} The maximum principle holds. If $u\in W^1_2(\Omega)$
satisfies \eqref{43} and  $\varphi :=
\left.u\right|_{\partial\Omega}$ is bounded, then
$\|u\|_{L_\infty(\Omega)} \le
\|\varphi\|_{L_\infty(\partial\Omega)}$. In particular, the weak
solution to \eqref{Dirichlet} is unique.

{\rm 2)} Any weak solution $u$ to \eqref{Equation} is H\" older
continuous, $u \in C^\alpha_{loc} (\Omega)$ for some $\alpha > 0$.
\end{theorem}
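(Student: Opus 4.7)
For part 1 (maximum principle and uniqueness), I would use Stampacchia's truncation. Set $k := \|\varphi\|_{L_\infty(\partial\Omega)}$; then $v := (u-k)_+ \in \overset{\circ}{W}{^1_2}(\Omega)$. By Theorem \ref{t41}, the bilinear form $(u,\eta)\mapsto \int_\Omega A\nabla u\cdot\nabla\eta\,dx$ extends by continuity to $\overset{\circ}{W}{^1_2}(\Omega)\times\overset{\circ}{W}{^1_2}(\Omega)$ (for skew-symmetric $A\in BMO$ this is a compensated-compactness estimate: the antisymmetric combination $\partial_i u\,\partial_j\eta-\partial_j u\,\partial_i\eta$ lies in the Hardy space $H^1$), so $v$ is admissible in \eqref{43}. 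Using $\nabla v = \chi_{\{u>k\}}\nabla u$, the identity becomes
$$\int_\Omega |\nabla v|^2\,dx + \int_\Omega A\nabla v\cdot\nabla v\,dx = 0.$$
Pointwise $A\xi\cdot\xi = 0$ for every $\xi\in\mathbb R^n$ since $A$ is skew-symmetric, so the second integral vanishes, $\nabla v \equiv 0$, and $u\le k$. The opposite bound is analogous, and uniqueness for \eqref{Dirichlet} follows on applying the estimate to the difference of two weak solutions.

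For part 2 (H\"older continuity), I would adapt the De Giorgi--Nash iteration. The key step is a Caccioppoli-type estimate: for concentric balls $B_r\subset B_R$ with $B_R\subset\subset\Omega$, any $k\in\mathbb R$, $v=(u-k)_+$, and a cutoff $\zeta\in C_0^\infty(B_R)$ with $\zeta\equiv 1$ on $B_r$ and $|\nabla\zeta|\le C/(R-r)$, test \eqref{43} with $\eta=\zeta^2 v$. The quadratic term $\int\zeta^2 A\nabla v\cdot\nabla v\,dx$ vanishes by pointwise skew-symmetry as in part 1, leaving
$$\int_\Omega \zeta^2|\nabla v|^2\,dx = -\int_\Omega 2\zeta v\,(I+A)\nabla v\cdot\nabla\zeta\,dx.$$
The delicate piece is the $A$-part of the right-hand side. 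I would split $A = (A)_{B_R} + (A-(A)_{B_R})$. The constant skew-symmetric matrix $(A)_{B_R}$ contributes zero: after one integration by parts the two residual terms contract $(A)_{B_R}$ with the symmetric tensors $\partial_i\zeta\,\partial_j\zeta$ and $\partial_i\partial_j\zeta$. The oscillation $A-(A)_{B_R}$ has $L^p(B_R)$-norm at most $c_p\|A\|_{BMO}|B_R|^{1/p}$ for every finite $p$ by John--Nirenberg, and a H\"older--Sobolev balance combined with Young's inequality absorbs the cross term on the left, producing a Caccioppoli estimate with constant depending only on $n$ and $\|A\|_{BMO}$.

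From this Caccioppoli estimate, De Giorgi's iteration on level sets yields first local boundedness and then a decay of oscillation $\operatorname{osc}_{B_{r/2}} u \le \theta \operatorname{osc}_{B_r} u$ for some $\theta<1$ independent of $r$, which iterates to $u\in C^\alpha_{loc}(\Omega)$ with $\alpha=-\log_2\theta$.

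The main obstacle throughout is the unboundedness of the BMO coefficient $A$: naive $L^\infty$ estimates are unavailable, and merely H\"older-bounding $A\nabla u$ loses integrability. The remedy is the pair of structural cancellations --- pointwise vanishing of the quadratic form by skew-symmetry, and vanishing of the $(A)_{B_R}$ contribution by skew-symmetry plus integration by parts --- together with John--Nirenberg integrability of the oscillation. This is precisely where the $BMO^{-1}$ hypothesis, which is strictly stronger than the form bound \eqref{41}, enters essentially, in contrast to the non-divergence-free counterexample of Remark \ref{r211}.
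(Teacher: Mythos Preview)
The paper does not prove this theorem; immediately after the statement it writes ``For the proof see \cite{NU} or \cite{SSSZ}.'' Your sketch is correct and is precisely the argument found in those references: the Stampacchia truncation for part~1), with the drift term vanishing by antisymmetry of the bilinear form $B(u,\eta)=\int A\nabla u\cdot\nabla\eta\,dx$ (your pointwise justification $A\xi\cdot\xi=0$ is morally right but should be phrased via $B(v,v)=-B(v,v)$ on the extended form, since $A|\nabla v|^2$ need not be in $L^1$), and for part~2) the De~Giorgi scheme built on a Caccioppoli inequality in which the constant part $(A)_{B_R}$ drops out by skew-symmetry and the oscillation $A-(A)_{B_R}$ is controlled by John--Nirenberg. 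One small addition worth making explicit in part~2): after applying H\"older to the oscillation term you need the factor $|B_R|^{1/p}$ to scale exactly against the Sobolev embedding of $\zeta v$ so that the resulting Caccioppoli constant is scale-invariant (depending only on $n$ and $\|A\|_{BMO}$); this is automatic once you choose $p=n$ and pair it with $\|\zeta v\|_{L_{2n/(n-2)}}$, but it is the place where a careless exponent choice would break the iteration.
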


For the proof see \cite{NU} or \cite{SSSZ}. The regularity theory
developped in Section \ref{sreg} is slightly better as it guarantees
that weak solutions are locally H\" older continuous with {\it any}
exponent $\alpha < 1$. Nevertheless, Theorem \ref{t42} means that
divergence-free drifts from $BMO^{-1}$ can  be also considered as
regular ones.

\begin{remark}
\label{r43} Note that the case $n=2$, $b \in L_2(\Omega)$,
$\operatorname{div} b = 0$, is the particular case of this
situation. Indeed, such drifts can be represented as a
vector-function with components $b_1 = \partial_2 h$, $b_2 = -
\partial_1 h$, where $h$ is a scalar function $h \in W^1_2(\Omega)$.
By the imbedding theorem $W^1_2(\Omega) \subset BMO(\Omega)$ we have
$$
A(x) = \left( \begin{array}{cc}
0 & -h(x) \\
h(x) & 0 \end{array} \right) \in BMO(\Omega) .
$$
\end{remark}

\section{Appendix}
 First we prove  Theorem
\ref{Weak_Max_Principle}.

\begin{proof} We present the proof in the case $n\ge 3$ only. The case $n=2$
differs from it by routine technical details.

1) The statement similar to our estimate \eqref{Max_Pr_2} (for more
general equations) can be found in \cite{Stampacchia}. In
particular, in \cite[Theorem 4.2]{Stampacchia} the following
estimate for weak solutions to the problem
\begin{equation}
\left\{ \quad \gathered -\Delta u +b\cdot \nabla u = f \quad \mbox{in} \quad \Omega, \\
u|_{\partial \Omega} = 0, \qquad \endgathered \right.
\label{Dirichlet_3}
\end{equation}
  was proved:
 \begin{equation}
\| u\|_{L_\infty(\Omega)} \ \le \ C~\Big( ~\| f\|_{L_p(\Omega)} +
\|u\|_{L_2(\Omega)}~\Big). \label{Stamp}
 \end{equation}
On the other hand,
\begin{equation}
\|u\|_{W^1_2(\Omega)} \ \le \ C~\|f\|_{W^{-1}_2(\Omega)}
\label{Zero_RHS}
\end{equation}
due to Theorem \ref{Existence_weak_smooth}. Hence we can exclude the
weak norm of $u$ from the right hand side of \eqref{Stamp} and
obtain the estimate \eqref{Max_Pr_2} in the case $\varphi\equiv 0$.
In general case we can split a weak solution $u$ of the problem
\eqref{Dirichlet} as $u=u_1+u_2$, where $u_1$ is a weak solution of
\eqref{Dirichlet_3} and $u_2$ is a weak solution to the problem
\eqref{Dirichlet} with the boundary data $\varphi$ and zero right
hand side. For $u_1$ we have \eqref{Zero_RHS} and for $u_2$ we have
$\|u_2\|_{L_\infty(\Omega)} \ \le \ \|\varphi\|_{L_\infty(\partial
\Omega)}$ by Theorem \ref{Strong_max_principle}.

2) As $b \in L_n(\Omega)$ we can complete the integral identity
\eqref{Integral_Identity_1} up to the test functions $\eta\in
\overset{\circ}{W}{^1_2}(\Omega)$. Denote
$k_0:=\|\varphi\|_{L_\infty(\partial\Omega)}$ and assume $k\ge k_0$.
Take in \eqref{Integral_Identity_1} $\eta=(u-k)_+$, where we denote
$(u)_+:=\max\{ u,0\}$. As $k\ge k_0$ we have $\eta\in
\overset{\circ}{W}{^1_2}(\Omega)$ and $\nabla \eta =
\chi_{A_k}\nabla u$ where $\chi_{A_k}$ is the characteristic
function of the set
$$A_k \ := \ \{ ~x\in \Omega: ~u(x)>k~\}.$$
We obtain the identity
$$
\int\limits_{A_k} |\nabla u|^2~dx \ + \ \int\limits_{A_k} b\cdot
(u-k)\nabla u ~dx \ = \ \int\limits_{A_k} f (u-k)~dx .
$$
The second term vanishes
$$
\int\limits_{A_k} b\cdot (u-k)\nabla u ~dx \ = \ \frac
12~\int\limits_\Omega b\cdot \nabla |(u-k)_+|^2~dx \ = \ 0,
$$
as $\operatorname{div} b = 0$, and hence
$$
\int\limits_{A_k} |\nabla u|^2~dx  \ = \ \int\limits_{A_k} f
(u-k)~dx, \qquad \forall~k\ge k_0.
$$
The rest of the proof goes as in the usual elliptic theory. Applying
the imbedding theorem we obtain
$$
\left(~\int\limits_{A_k} |\nabla u|^2~dx\right)^{\frac 12}  \ \le \
C(n)~\left(~ \int\limits_{A_k} |f|^{\frac
{2n}{n+2}}~dx\right)^{\frac {n+2}{2n}},
$$
and using the H\" older inequality we get
$$
\|f\|_{L_{\frac {2n}{n+2}}(A_k)} \ \le \ |A_k|^{\frac {n+2}{2n} -
\frac 1p}~ \|f\|_{L_p(A_k)} .
$$
So we arrive at
$$
\int\limits_{A_k} |\nabla u|^2~dx \ \le \ C(n)~
\|f\|_{L_p(\Omega)}^2~|A_k|^{1-\frac 2n+\varepsilon} , \qquad
\forall~k\ge k_0,
$$
where  $\varepsilon:=2\left(\frac 2n-\frac 1p\right)>0$. This
inequality yields the following estimate, see \cite[Chapter II,
Lemma 5.3]{LU},
$$
\operatorname{esssup}\limits_\Omega (u-k_0)_+ \ \le \ C(n,p,
\Omega)~\|f\|_{L_p(\Omega)} .
$$
The estimate of $\operatorname{essinf}\limits_\Omega u$ can be
obtained in a similar way if we replace $u$ by $-u$. \end{proof}

In order to prove Theorem \ref{Approximation} we need some auxiliary
results.

\begin{theorem}
\label{L_1-theorem} Assume $n\ge 3$, $b\in C^\infty(\bar \Omega)$,
$\operatorname{div} b=0$ in $\Omega$, $f\in L_1(\Omega)$, and assume
$u\in \overset{\circ}{W}{^1_2}(\Omega)$
 is a weak solution of \eqref{Dirichlet} with $\varphi\equiv 0$.
Then for any $q\in \big[1,\frac n{n-2}\big)$
 the following estimate holds:
\begin{equation}
\| u\|_{L_q(\Omega)} \ \le \ C(n,q,\Omega)~\|f\|_{L_1(\Omega)}.
\label{L_1-estimate}
\end{equation}
\end{theorem}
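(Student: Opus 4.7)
The natural approach is duality. Fix $q \in (1, n/(n-2))$, so that the conjugate exponent satisfies $q' := q/(q-1) > n/2$. For arbitrary $g \in L_{q'}(\Omega)$, consider the adjoint Dirichlet problem
\begin{equation*}
-\Delta v - b \cdot \nabla v \ = \ g \quad \text{in } \Omega, \qquad v|_{\partial \Omega} \ = \ 0.
\end{equation*}
Since the drift $-b$ is smooth and still divergence-free, and since $g \in L_{q'}(\Omega) \subset W^{-1}_2(\Omega)$, Theorem \ref{Existence_weak_smooth} yields a unique weak solution $v \in \overset{\circ}{W}{^1_2}(\Omega)$. Because $q' > n/2$, part 2 of Theorem \ref{Weak_Max_Principle} gives $v \in L_\infty(\Omega)$ together with the key estimate
\begin{equation*}
\|v\|_{L_\infty(\Omega)} \ \le \ C(n,q',\Omega) \, \|g\|_{L_{q'}(\Omega)},
\end{equation*}
where the constant is independent of $b$ thanks to $\operatorname{div} b = 0$.

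Next, I would use $v$ as a test function in the weak formulation for $u$. Since $b$ is smooth, this requires approximating $v \in \overset{\circ}{W}{^1_2}(\Omega) \cap L_\infty(\Omega)$ by functions $v_k \in C_0^\infty(\Omega)$ with $v_k \to v$ in $W^1_2$ and $\|v_k\|_{L_\infty} \le C\|v\|_{L_\infty}$; such approximations are standard via truncation and mollification. Passing to the limit one obtains
\begin{equation*}
\int\limits_\Omega \nabla u \cdot \nabla v \, dx \ + \ \int\limits_\Omega (b \cdot \nabla u) \, v \, dx \ = \ \int\limits_\Omega f v \, dx.
\end{equation*}
On the other hand, using $u \in \overset{\circ}{W}{^1_2}(\Omega)$ as test function in the weak formulation for $v$ (equivalent by $\operatorname{div} b = 0$ to the formulation with drift $-b$ tested by $u$, or by direct integration by parts since $b$ is smooth) gives
\begin{equation*}
\int\limits_\Omega \nabla v \cdot \nabla u \, dx \ + \ \int\limits_\Omega v \, (b \cdot \nabla u) \, dx \ = \ \int\limits_\Omega g u \, dx.
\end{equation*}
Comparing, $\int_\Omega g u \, dx = \int_\Omega f v \, dx$, and hence
\begin{equation*}
\left| \int\limits_\Omega g u \, dx \right| \ \le \ \|f\|_{L_1(\Omega)} \|v\|_{L_\infty(\Omega)} \ \le \ C \|f\|_{L_1(\Omega)} \|g\|_{L_{q'}(\Omega)}.
\end{equation*}
Taking supremum over $g$ in the unit ball of $L_{q'}(\Omega)$ yields \eqref{L_1-estimate} for $q \in (1, n/(n-2))$; the endpoint $q=1$ follows from the inclusion $L_q(\Omega) \subset L_1(\Omega)$ on bounded domains.

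The main obstacle is arranging the duality identity rigorously: producing the $L_\infty$-bounded test function $v$ with a constant independent of $b$ (the precise point where part 2 of Theorem \ref{Weak_Max_Principle}, and hence the hypothesis $\operatorname{div} b = 0$, is essential), and justifying its use as a test function given that $f$ lies only in $L_1$, which is why the $L_\infty$ bound on $v$ must be preserved throughout the approximation.
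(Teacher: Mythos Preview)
Your proposal is correct and follows essentially the same duality argument as the paper: solve the adjoint problem with drift $-b$, apply the $b$-independent maximum estimate from Theorem \ref{Weak_Max_Principle} (part 2) to bound the dual solution in $L_\infty$, and pair the two weak formulations to obtain $\int gu = \int fv$. The only cosmetic difference is that the paper, exploiting $b\in C^\infty(\bar\Omega)$, takes the dual solution in $W^2_{q'}(\Omega)$ and integrates by parts directly, whereas you work with the weak solution and an $L_\infty$-controlled approximation; both routes are valid and lead to the same inequality.
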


\begin{proof} Assume $q\in \big(1,\frac n{n-2}\big)$. By
duality we have
$$
\| u\|_{L_q(\Omega)} \ = \ \sup\limits_{g\in L_{q'}(\Omega), \
\|g\|_{L_{q'}(\Omega)}\le 1} \ \int\limits_\Omega ug~dx ,
$$
where $q':=\frac q{q-1}$, $q'>\frac n2$. For any  $g\in
L_{q'}(\Omega)$ denote by $w_g \in W^2_{q'}(\Omega)$ a solution to
the problem
$$
\left\{ \ \  \gathered -\Delta w_g - b\cdot \nabla w_g  \ = \
g\qquad \mbox{in} \quad \Omega, \\ w_g|_{\partial \Omega} \ = \ 0.
\qquad \qquad
\endgathered \right.
$$
From Theorem \ref{Weak_Max_Principle} we conclude that for  $w_g$
the following estimate holds:
$$
\| w_g\|_{L_\infty(\Omega)} \ \le \ C(n, q,
\Omega)~\|g\|_{L_{q'}(\Omega)}.
$$
Integrating by parts we obtain
$$
\int\limits_\Omega ug~dx \ = \ \int\limits_\Omega u(-\Delta w_g -
b\cdot \nabla w_g)~dx \ = \ \int\limits_\Omega \nabla u \cdot
(\nabla w_g + bw_g)~dx \ = \ \int\limits_\Omega fw_g~dx .
$$
Then for any  $g\in L_{q'}(\Omega)$ such that
$\|g\|_{L_{q'}(\Omega)}\le 1$ we get
$$
\int\limits_\Omega ug~dx \ \ = \
 \int\limits_\Omega fw_g~dx \ \le \
\|f\|_{L_1(\Omega)}\| w_g\|_{L_\infty(\Omega)} \ \le \ C(n, q,
\Omega)~\|f\|_{L_1(\Omega)} .
$$
Hence we obtain \eqref{L_1-estimate}. \end{proof}

Another auxiliary result we need is the following extension theorem.

\begin{theorem}
\label{Extension} Assume $\Omega\subset \mathbb  R^n$ is a bounded
domain of class $C^1$. Then there exists a bounded linear extension
operator $T: L_\infty(\partial \Omega)\cap W^{1/2}_2(\partial
\Omega) \to L_\infty(\Omega)\cap W^{1}_2( \Omega)$ such that
$$
T\varphi|_{\partial \Omega} \ = \ \varphi, \qquad \forall~\varphi
\in L_\infty(\partial \Omega)\cap W^{1/2}_2(\partial \Omega),
$$
$$
\| T\varphi\|_{W^1_2(\Omega)} \ \le \
C(\Omega)~\|\varphi\|_{W^{1/2}_2(\partial \Omega)}, \qquad
\|T\varphi\|_{L_\infty(\Omega)} \ \le \
C(\Omega)~\|\varphi\|_{L_\infty(\partial\Omega)} .
$$
\end{theorem}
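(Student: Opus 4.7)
The plan is to take $T\varphi$ to be the \emph{harmonic extension} of $\varphi$, namely the unique weak solution $u \in W^1_2(\Omega)$ of the Dirichlet problem
\[
-\Delta u = 0 \quad \text{in } \Omega, \qquad u|_{\partial\Omega} = \varphi.
\]
By the linearity of this problem and the uniqueness of its $W^1_2$-solution, the assignment $\varphi \mapsto u$ defines a linear operator from $W^{1/2}_2(\partial\Omega)$ to $W^1_2(\Omega)$, and the trace identity $T\varphi|_{\partial\Omega} = \varphi$ is built into the definition.

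The $W^1_2$ bound is classical trace theory. Choose any bounded linear extension $\tilde\varphi \in W^1_2(\Omega)$ of $\varphi$ with $\|\tilde\varphi\|_{W^1_2(\Omega)} \le C(\Omega)\|\varphi\|_{W^{1/2}_2(\partial\Omega)}$, which exists because $\partial\Omega$ is $C^1$. Writing $u = \tilde\varphi + v$ with $v \in \overset{\circ}{W}{^1_2}(\Omega)$, the function $v$ satisfies $\int_\Omega \nabla v\cdot \nabla\eta\,dx = -\int_\Omega \nabla\tilde\varphi\cdot \nabla\eta\,dx$ for all $\eta \in \overset{\circ}{W}{^1_2}(\Omega)$. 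Taking $\eta = v$ and applying Cauchy--Schwarz gives $\|\nabla v\|_{L_2} \le \|\nabla\tilde\varphi\|_{L_2}$, whence $\|u\|_{W^1_2(\Omega)} \le C(\Omega)\|\varphi\|_{W^{1/2}_2(\partial\Omega)}$.

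For the $L_\infty$ bound, set $M := \|\varphi\|_{L_\infty(\partial\Omega)}$. Since $\varphi \le M$ almost everywhere on $\partial\Omega$, the function $(u - M)_+$ has vanishing trace and therefore belongs to $\overset{\circ}{W}{^1_2}(\Omega)$. Using it as a test function in the weak form of $-\Delta u = 0$ yields
\[
\int_\Omega |\nabla (u-M)_+|^2\,dx = \int_\Omega \nabla u\cdot \nabla(u-M)_+\,dx = 0,
\]
so $(u-M)_+$ is constant on $\Omega$, and the zero trace forces this constant to be zero; hence $u \le M$ a.e. Applying the same argument to $-u$, which is the harmonic extension of $-\varphi$, produces $u \ge -M$ a.e., and together these establish $\|T\varphi\|_{L_\infty(\Omega)} \le \|\varphi\|_{L_\infty(\partial\Omega)}$.

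The one delicate step is the assertion that $(u-M)_+ \in \overset{\circ}{W}{^1_2}(\Omega)$, which rests on the standard Sobolev truncation fact that $w \in W^1_2(\Omega)$ with nonpositive trace satisfies $w_+ \in \overset{\circ}{W}{^1_2}(\Omega)$; this follows from the chain rule for Lipschitz compositions combined with approximation by smooth functions. Beyond that, the construction yields linearity and both estimates automatically, and no further difficulties are anticipated.
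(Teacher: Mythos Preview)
Your argument is correct. The harmonic extension does the job: linearity and the trace identity are immediate, the $W^1_2$ bound follows from the energy minimization property you wrote out, and the $L_\infty$ bound is the weak maximum principle, with the truncation lemma $(u-M)_+ \in \overset{\circ}{W}{^1_2}(\Omega)$ being the only point requiring care, which you flag appropriately.

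The paper takes a different route: it localizes and flattens the boundary, then writes down the explicit Poisson-type formula
\[
(T\varphi)(x',x_n) = \eta(x_n)\int_{\mathbb{R}^{n-1}} \varphi(x'-x_n\xi')\,\psi(\xi')\,d\xi'
\]
in a half-space, checking directly that this is bounded $W^{1/2}_2 \to W^1_2$ and $L_\infty \to L_\infty$. Your approach is arguably cleaner: it avoids the partition-of-unity patching, and it yields the sharper bound $\|T\varphi\|_{L_\infty(\Omega)} \le \|\varphi\|_{L_\infty(\partial\Omega)}$ with constant $1$ rather than $C(\Omega)$. On the other hand, your proof already consumes the existence of \emph{some} bounded extension $W^{1/2}_2(\partial\Omega) \to W^1_2(\Omega)$ (to set up the Dirichlet problem), so it is not self-contained in the same way; the paper's explicit formula builds that extension from scratch. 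Both are standard and either is adequate for the purpose at hand.
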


\begin{proof} For the sake of completeness we briefly
recall the proof of Theorem \ref{Extension}. After the localization
and flattening of the boundary it is sufficient to construct the
extension operator from $\mathbb  R^{n-1}$ to $\mathbb
R^n_+:=\mathbb R^{n-1}\times (0, +\infty)$. Then we can take the
standard operator
$$
(T\varphi)(x', x_n) \ = \ \eta(x_n)~\int\limits_{\mathbb  R^{n-1}}
\varphi(x'-x_n\xi')\psi(\xi')~d\xi', \qquad (x',x_n)\in \mathbb
R^n_+,
$$
where $x':=(x_1, \ldots, x_{n-1}) \in \mathbb  R^{n-1}$, $\eta\in
C_0^\infty(\mathbb  R)$, $\eta(0)=1$, $\psi\in C_0^\infty(\mathbb
R^{n-1})$, $\int\limits_{\mathbb  R^{n-1}}\psi(\xi')~d\xi' = 1$.
This operator is bounded from $W^{1/2}_2(\mathbb  R^{n-1})$ to
$W^1_2(\mathbb  R^n_+)$ and also from $L_\infty(\mathbb  R^{n-1})$
to $L_\infty(\mathbb  R^n_+)$. More details can be found in
\cite{BIN}.
\end{proof}

Now we can give an elementary proof of Theorem \ref{Approximation}.

\begin{proof} The function
$v_k:=u_k-u\in \overset{\circ}{W}{^1_2}(\Omega)$ is a weak solution
to the problem
$$
\left\{ \quad \gathered -\Delta v_k +b_k\cdot \nabla v_k \ = \ f_k
\quad \mbox{in}\quad \Omega ,\\ v_k|_{\partial \Omega} \ = \ 0
,\qquad \qquad
\endgathered \right.
$$
where
$$
f_k:=(b-b_k)\cdot \nabla u, \qquad f_k\in L_1(\Omega), \qquad
\|f_k\|_{L_1(\Omega)} \ \to \ 0.
$$
Assume $q\in \big[1, \frac n{n-2}\big)$. By Theorem
\ref{L_1-theorem} we have
$$
\| v_k\|_{L_q(\Omega)} \ \le \ C(n, \Omega)~\|f_k\|_{L_1(\Omega)} \
\to \ 0 ,
$$
and hence \eqref{L_1-conv} follows.

Now assume additionally $\varphi\in L_\infty(\partial\Omega)$.
Denote $\tilde \varphi:=T\varphi$ where $T$ is the extension
operator from Theorem \ref{Extension}. Taking in the integral
identity \eqref{Integral_Identity_2} for $u_k$ and $b_k$ the test
function $\eta = u_k-\tilde \varphi \in
\overset{\circ}{W}{^1_2}(\Omega)$ we obtain
$$
\int\limits_\Omega |\nabla u_k|^2~dx \ - \ \int\limits_\Omega u_k
b_k\cdot \nabla (u_k-\tilde \varphi)~dx \ = \ \int\limits_\Omega
\nabla u_k \cdot \nabla \tilde \varphi~dx \ + \  \langle f,
u_k-\tilde \varphi\rangle .
$$
Using the condition $\operatorname{div} b_k=0$ we get
$$
\int\limits_\Omega u_k b_k\cdot \nabla (u_k-\tilde \varphi)~dx \ = \
\int\limits_\Omega \tilde \varphi b_k\cdot \nabla (u_k-\tilde
\varphi)~dx .
$$
Therefore,
$$
\gathered \|\nabla u_k\|_{L_2(\Omega)}^2 \ \le \ \Big( \|\tilde
\varphi\|_{L_\infty( \Omega)}\|b_k \|_{L_2(\Omega)} \ + \ \|
f\|_{W^{-1}_2(\Omega)}\Big) \Big(\|u_k\|_{W^1_2(\Omega)}+ \|\tilde
\varphi\|_{W^1_2(\Omega)}\Big) \ + \\ + \ \|\nabla
u_k\|_{L_2(\Omega)}\|\nabla \tilde \varphi\|_{L_2(\Omega)} .
\endgathered
$$
Applying  Friedrichs' and Young's inequalities we obtain the
estimate
\begin{equation}
\| u_k\|_{W^1_2(\Omega)} \ \le \ C, \label{W^1_2-estimate}
\end{equation}
with a constant $C$ independent on $k$. As the convergence
\eqref{L_1-conv} is already established, from \eqref{W^1_2-estimate}
we derive \eqref{Weak_W^1_2}.

Finally, if $\varphi\equiv 0 $ then we have the energy identities
for $u_k$
$$
\int\limits_\Omega |\nabla u_k|^2 \ = \ \langle f, u_k\rangle ,
$$
and using the weak convergence \eqref{Weak_W^1_2} we arrive at
\eqref{Energy_inequality}. \end{proof}

Now we turn to the proof of Theorem \ref{Existence}.

\begin{proof}
We take a sequence $b_k\in C^\infty(\bar \Omega)$,
$\operatorname{div} b_k=0$, such that $b_k\to b$ in $L_2(\Omega)$.
Let $u_k\in W^1_2(\Omega)$ be a weak solution to the problem
\eqref{30}. Repeating the arguments in the proof of Theorem
\ref{Approximation}, we obtain the estimate \eqref{W^1_2-estimate}
with a constant $C$ independent on $k$. Using this estimate we can
extract a subsequence satisfying \eqref{Weak_W^1_2} for some $u\in
W^1_2(\Omega)$. The weak convergence \eqref{Weak_W^1_2} and the
strong convergence $b_k\to b$ in $L_2(\Omega)$ allow us to pass to
the limit in the integral identities \eqref{Integral_Identity_1}
corresponding to $u_k$ and $b_k$. Therefore, $u$ is a weak solution
to \eqref{Dirichlet}. \end{proof}

Now  we present the  proof of Theorem \ref{Week_Max_Principle_2}.

\begin{proof}
Let $b_k$ be smooth divergence-free vector fields such that $b_k\to
b$ in $L_2(\Omega)$. Denote by $u_k$ the weak solution to the
problem \eqref{30}. By Theorem \ref{Weak_Max_Principle}
\begin{equation}
\| u_k\|_{L_\infty(\Omega)} \ \le \  \|\varphi\|_{L_\infty(\partial
\Omega)} + C~\|f\|_{L_p(\Omega)} \label{Max_Pr_3}
\end{equation}
with the constant $C$ depending only on $n$, $p$ and $\Omega$. From
Theorem \ref{Approximation} we have the convergence $u_k\to u$ in
$L_1(\Omega)$ and hence we can extract a subsequence (for which we
keep the same notation) such that
$$
u_k \to u \quad \mbox{a.e. in} \quad \Omega.
$$
Passing to the limit in \eqref{Max_Pr_3} we obtain \eqref{Max_Pr_4}.
\end{proof}

Finally we give the proof of Theorem \ref{Local_Boundedness}.

\begin{proof}
To simplify the presentation we give the proof only in the case
$f\equiv 0$. The extension of the result for non-zero right hand
side can be done by standard methods, see \cite[Theorem
4.1]{Han_Lin} or \cite{Preprint_Darmstadt}.  First we derive the
estimate
\begin{equation}
\|  u\|_{L_{\infty}(B_{1/2})} \ \le \ C
\left(1+\|b\|_{L_p(B)}\right)^\mu~ \| u\|_{L_{2p'}(B)},  \qquad
p':=\frac p{p-1} \label{We_want}
\end{equation}
(with some positive constants $C$ and $\mu$ depending only on $n$
and $p$)   under additional assumption  $u\in C^\infty(B)$. We
explore Moser's iteration technique, see \cite{Moser}. Assume $\beta
\ge 0$ is arbitrary  and let $\zeta\in C_0^\infty(B)$ be a cut-off
function. Take a test function  $ \eta = \zeta^2|u|^{\beta} u $  in
the identity \eqref{Integral_Identity_1}. Denote $ w \ := \
|u|^{\frac {\beta+2} 2}$. Then after integration by parts and some
routine calculations we obtain the inequality
\begin{equation}
\gathered  \int\limits_B |\nabla(\zeta w)|^2~dx \ \le \ C ~
\int\limits_B  |w|^2~\Big(|\nabla \zeta|^2  +
 |b|~ |\nabla \zeta|\Big)
~dx
\endgathered
\label{Instead}
\end{equation}
 Applying the imbedding theorem and the H\" older
inequality and choosing the test function $\zeta$ in an appropriate
way  we arrive at the inequality
$$
\| w\|_{L_{\frac {2n}{n-2}}(B_r)} \ \le \ C\left( \frac{1 }{R-r} \ +
\ \|b\|_{L_p(B_R)} \right)~ \| w\|_{L_{2p'}(B_R)},
$$
which holds for any $\frac 12\le r<R\le 1$. Note that $\frac
{2n}{n-2}>2p'$ as $p>\frac n2$ if $n\ge 4$ and $p=2$ if $n=3$. The
latter inequality gives us the estimate
\begin{equation}
\|  u\|_{L_{\frac{n\gamma}{n-2} }(B_r)} \ \le \ C^{\frac{2}\gamma}
\left(\frac{1}{R-r}  +  \|b\|_{L_p(B_R)} \right)^{\frac 2{ \gamma}}~
\| u\|_{L_{p'\gamma}(B_R)} \label{Iterate}
\end{equation}
with an arbitrary $\gamma\ge 2$, $\gamma:=\beta+2$. Denote
$s_0=2p'$, $s_m:=\chi s_{m-1} $, where
$\chi:=\frac{n(p-1)}{p(n-2)}$, and denote also $R_m= \frac 12
+\frac1{2^{m+1}}$. Taking in \eqref{Iterate} $r=R_m$, $R=R_{m-1}$,
$\gamma=\frac{s_{m-1}}{p'}$ we obtain
$$
\gathered  \| u\|_{L_{s_m }(B_{R_m})}  \ \le \   \Big(    C~2^{m+1}
\ + \ C\|b\|_{L_p(B)}
 \Big)^{\frac 1{ \chi^{m-1}}}~ \|
u\|_{L_{s_{m-1}}(B_{R_{m-1}})}
\endgathered
$$
Iterating this inequality we arrive at \eqref{We_want}.

Now we need to get rid of the assumption   $u\in C^\infty(B)$.
Assume $u\in W^1_2(B)$ is an arbitrary weak solution to
\eqref{Equation}. Let $\zeta\in C_0^\infty(B)$ be a cut-off function
such that $\zeta\equiv 1$ on $B_{5/6}$ and denote $v:=\zeta u$. Then
$v$ is a weak solution to the boundary value problem
$$
\left\{ \quad \gathered -\Delta v +b\cdot \nabla v = g \quad \mbox{in} \quad B \\
v|_{\partial B} = 0 \qquad \endgathered \right.
$$
where
$$
g:=   -u\Delta \zeta - 2\nabla u \cdot \nabla \zeta+ bu\cdot \nabla
\zeta.
$$
Note that $g \equiv 0$     and $v\equiv u$ on $B_{5/6}$. As $b\in
L_p(B)$ with  $p>\frac n2$ we have  $g\in W^{-1}_2(B)$. Now we take
a sequence $b_k\in C^\infty(\bar B)$, $\operatorname{div} b_k=0$,
such that $b_k\to b$ in $L_p(B)$  and let $v_k$ be the  weak
solution to the problem
$$
\left\{ \quad \gathered -\Delta v_k +b_k\cdot \nabla v_k = g \quad \mbox{in} \quad B \\
v_k|_{\partial B} = 0 \qquad \endgathered \right.
$$
From Theorem \ref{Approximation} we have $v_k\rightharpoonup v$ in
$W^1_2(B)$ and as $p>\frac n2$ we can extract a subsequence (for
which we keep the same notation) such that $v_k\to v$ a.e. in $B$
and $v_k\to v$ in $L_{2p'}(B)$. As $g\equiv 0$ on $B_{5/6}$ from the
usual elliptic theory (see \cite{LU}) we conclude that $v_k\in
C^\infty(B_{5/6})$. Applying \eqref{We_want} (with the obvious
modification in radius) we obtain the estimate
$$
\|  v_k \|_{L_{\infty}(B_{1/2})} \ \le \ C
 \left(1+\|b_k\|_{L_p(B)}\right)^\mu~ \| v_k\|_{L_{2p'}(B_{3/4})}.
$$
Hence $v_k$ are equibounded on $B_{1/2}$. Passing to the limit in
the above inequality and taking into account that $v=u$ on $B_{
5/6}$ we obtain
$$
\|  u \|_{L_{\infty}(B_{1/2})} \ \le \ C
 \left(1+\|b\|_{L_p(B)}\right)^\mu~\| u\|_{L_{2p'}(B_{ 3/4})}.
$$
To conclude the proof we remark that for $p>\frac n2$ from the
imbedding theorem we have
$$
\| u\|_{L_{2p'}(B)} \ \le \ C(n,p)~\|u\|_{W^1_2(B)}.
$$
\end{proof}

\bibliographystyle{amsalpha}

\end{document}